\documentclass[letterpaper, 10 pt]{ieeeconf}  
\usepackage{amssymb}
\usepackage{amsmath}
\usepackage{graphicx}
\usepackage{comment}
\usepackage{booktabs}
\usepackage{bm}
\usepackage{graphicx}
\usepackage{epstopdf}
\setcounter{MaxMatrixCols}{20}
\usepackage{amssymb}
\usepackage{wasysym}
\usepackage{float}
\usepackage{color}
\usepackage{dsfont}
\usepackage{algorithm,algorithmicx}
\usepackage{algpseudocode}
\usepackage{epstopdf}
\usepackage{tabularx}
\usepackage{cite}
\usepackage{graphicx}
\usepackage[caption=false]{subfig}

\usepackage{enumitem}
\usepackage{fancyhdr}
\usepackage{marginnote}
\newtheorem{theorem}{Theorem}

\newtheorem{definition}{Definition}

\newtheorem{remark}{Remark}
\newtheorem{assumption}{Assumption}
\newtheorem{lemma}{Lemma}
\newtheorem{alg}{Algorithm}
\graphicspath{ {./img/} }
\newcommand{\eod}{\ensuremath{\hfill\Box}}
\IEEEoverridecommandlockouts                              
\overrideIEEEmargins


\title{\LARGE \bf
Distributed Augmented Lagrangian Method for Link-Based Resource Sharing Problems of Multi-Agent Systems 
}
\author{Wicak Ananduta, Angelia Nedi\'{c}, and Carlos Ocampo-Martinez
	\thanks{W. Ananduta is with the Delft Center for Systems and Control, TU Delft, Netherlands. C. Ocampo-Martinez is with Institut de Rob\`{o}tica i Inform\`{a}tica Industrial
		(CSIC-UPC), Barcelona, Spain. A. Nedi\'{c} is with School of Electrical, Computer and Energy Engineering, Arizona State University, Arizona, USA (emails: {\tt\small w.ananduta@tudelft.nl, carlos.ocampo@upc.edu, angelia.nedich@asu.edu})}
	\thanks{This work has received funding from the European Union's Horizon 2020 research and innovation programme under the Marie Sk\l{}odowska-Curie grant agreement No 675318 (INCITE). }
}
\begin{document}

\maketitle
\thispagestyle{plain}
\pagestyle{plain}

\begin{abstract}
A multi-agent optimization problem motivated by the management of energy systems is discussed. The associated cost function is separable and convex although not necessarily strongly convex and there exist edge-based coupling equality constraints. In this regard, we propose a distributed algorithm based on solving the dual of the augmented problem. Furthermore, we consider that the communication network might be time-varying and the algorithm might be carried out asynchronously. The time-varying nature and the asynchronicity are modeled as random processes. Then, we show the convergence and the convergence rate of the proposed algorithm under the aforementioned conditions. 
\end{abstract}
\begin{keywords}
multi-agent optimization, stochastic time-varying network, asynchronous method
\end{keywords}
\vspace{-5pt}
\section{Introduction}
We consider an optimization problem of multi-agent systems. Specifically, the agents in the network cooperatively optimize a separable convex cost function subject to convex local constraints and equality coupling constraints. Furthermore, the set of decision variables of each agent is partitioned into shared decisions, i.e., variables that are involved in the coupling constraints and can be shared with other agents, and private decisions, i.e., variables that must be kept private and cannot be shared with other agents. The objective of the paper is to develop a distributed method for this problem. In addition, we also consider imperfect operation where the communication network might be time-varying and the algorithm might not be carried out synchronously.


The optimization problem considered is mainly motivated by economic dispatch problems of large-scale energy systems \cite{molzahn2017} and belongs to a subclass of network flow problems \cite{bertsekas1998}, where we seek an optimal flow of  certain goods from some sources to some sinks. Specifically, it is a convex network flow problem with a particular control structure, where each node has a computational unit and these units cooperatively solve the problem of the network. As a convex network flow problem, it represents an optimization problem of flow-based networks, such as  electrical \cite{molzahn2017}, thermal energy \cite{rostampour2019} and water networks \cite{grosso2014}. Moreover, the problem also represents a convex relaxation of network flow problems of indivisible goods. Solutions to a convex relaxation  can be used as a lower bound of the optimal solution to the original problem. \color{black}

{The first challenge of developing a distributed method to solve the problem considered is how to deal with the coupling constraints. To address this issue, we employ 
the Lagrangian relaxation  \cite{bertsekas1997,boyd2010,boyd2011}.} The main idea of this concept is to relax the coupling constraints such that the relaxed problem is decomposable. In this regard, Lagrange multipliers associated with the coupling constraints are introduced. In the dual problem, we aim to maximize such multipliers. Many distributed optimization methods, including those for energy management problems, e.g.,\cite{bakirtzis2003,kraning2014,wang2015,hans2018}, are developed based on solving the dual problem. 
Such distributed algorithms are iterative and require the exchanging of information. In particular, the agents that are coupled through link-based constraints must communicate certain information at each iteration. In this article, we consider the case when the information exchange process might be imperfect. In particular, we study the possibility of having a randomly time-varying communication network and asynchronous updates, which are 
relevant to the applications that we consider \cite{ge2017}. \color{black} 

Therefore, in this paper, we develop a distributed optimization algorithm suitable for the previously explained problem. The algorithm is based on the Lagrange dual approach. Furthermore, we also consider that the communication network might be stochastically time-varying and the algorithm can be implemented asynchronously. 
Then, we show analytically that the sequence generated by the proposed algorithm converges to an optimal solution almost surely with the rate of $\mathcal{O}(1/k)$. 

{Now, we position the contributions of this work with respect to the existing literature.}  As previously mentioned, the problem that we consider is suitable to be decomposed using the Lagrangian dual approach. Although it is possible to reformulate the problem into a consensus-based problem \cite{nedic2015}, the latter approach can become impractical when the number of agents is large because the information that must be exchanged is unnecessarily large. 
In order to deal with a larger class of cost functions, particularly those that are not necessarily strongly convex, we consider augmenting the problem. 
In this regard, the proposed algorithm is more closely related to the accelerated distributed augmented Lagrangian (ADAL) method, discussed in \cite{chatzipanagiotis2015,lee2018}, than to the alternating direction method of multipliers (ADMM) \cite{bertsekas1997,boyd2011}. Similar to the ADAL method, we use some information from the neighbors in the local optimization step and require a convex combination step to update the primal variable. Differently, in the proposed method, each agent only performs a convex combination step to update the shared variables instead of all the decisions. Moreover, since we consider a different augmented Lagrangian function, the condition of the step size, which guarantees convergence, is also different. Finally, we also note that the ADAL method in \cite{chatzipanagiotis2015,lee2018} considers perfect communication, i.e., with a fixed neighbor-to-neighbor communication graph.

 In this work, we are interested in developing a distributed method that works over a time-varying communication graph and asynchronous updates, implying imperfect information exchanges.   
To that end, we consider the communication network and asynchronous updates as random processes, similarly to the work in  \cite{tsai2017,wei2013,chang2016,hong2017}. 
It is important to note that the distributed algorithm developed in this manuscript is different from those in \cite{tsai2017,wei2013,chang2016,hong2017} as they consider the ADMM approach and, to the best of our knowledge, the ADAL approach that we consider has not been employed on stochastically time-varying networks.   
In addition, for energy management problems, distributed methods that have been proposed, e.g., \cite{kraning2014,wang2015,hans2018,kar2014,hug2015,baker2016,kargarian2018,sousa2019,ananduta2020} typically assume a perfect communication process, i.e., the necessary information required to execute the updates is available at each iteration. Therefore, for the considered applications, this technical note provides a more resilient distributed method than those in the aforementioned papers 
in dealing with potential communication problems. \color{black}



\section{Problem formulation}
\label{sec:prob_form}
\subsection{Preliminaries}
\paragraph*{Notations} We consider all vectors as column vectors. A stack of column vectors $x_i$, for all $i\in\mathcal{N}=\{1,2,\dots,n\}$, denoted by $[x_i]_{i\in\mathcal{N}}$, is also a column vector. The inner product of vectors $x$ and $y$ is denoted by $\langle x,y\rangle$. The Euclidean norm of vector $x$ is denoted by $\|x\|_2$. Moreover, for a diagonal matrix $D \in \mathbb{R}^{d\times d}$, the square of a weighted vector norm induced by $D$ is denoted by $\|\cdot\|_D^2$, i.e., $\|x\|_D^2=\langle x,Dx\rangle$, for a vector $x$.  The all-ones vector with the size of $n$ is denoted by $\mathds{1}_n$ whereas the identity matrix with the size $n\times n$ is denoted by $I_n$. Furthermore, the block-diagonal operator, which construct a block diagonal matrix of the arguments, is denoted by $\operatorname{blkdiag}(\cdot)$. 
\begin{definition}
	\label{def:cvx}
	\textit{(Convexity)} A differentiable function $f:\mathbb{R}^n \to \mathbb{R}$ is convex, if, for any $x,y\in\mathbb{R}^n$, it holds that
	$$f(y)-f(x)\geq \langle\nabla f(x),y-x\rangle.$$
\end{definition}
\begin{definition}
	\label{def:strong_cvx}
	\textit{(Strong convexity \cite[Theorem 5.24.iii]{beck2017})} A differentiable function $f:\mathbb{R}^n\to\mathbb{R}$ is strongly convex with strong convexity constant $m$, if, for any $x,y\in\mathbb{R}^n$, it holds that
	$$ \langle\nabla f(y)-\nabla f(x),y-x\rangle \geq m \|y-x\|^2.$$
\end{definition}

\subsection{Multi-agent optimization}
We consider a group of interconnected agents that is represented by an undirected graph $\mathcal{G} = (\mathcal{N},\mathcal{E})$, where $\mathcal{N}=\{1,\dots,n \}$ denotes the set of agents and $\mathcal{E} \subseteq \mathcal{N}\times\mathcal{N}$ denotes the set of links that connect the agents, i.e., $\{i,j\}\in \mathcal{E}$ means that agent $i$ is coupled with agent $j$ in a constraint. Furthermore, denote the set of neighbors of agent $i$ by $\mathcal{N}_i=\{j: \{i,j\} \in \mathcal{E} \}$. The optimization problem that all agents consider to solve cooperatively is
\begin{subequations}
\begin{align}
&\underset{(\bm{u}_i,\bm{v}_i) \in \mathcal{C}_i,i\in\mathcal{N}}{\text{minimize}} \quad  \sum_{i=1}^{n} \left(f_i^{\mathrm{p}}(\bm{u}_{i}) + f_i^{\mathrm{s}}(\bm{v}_{i})\right) \label{eq:cost_f_t}\\
&\text{subject to } \bm{v}_{i}^j + \bm{v}_{j}^i = 0, \quad \forall j\in\mathcal{N}_i, \ i \in \mathcal{N}, \label{eq:coup_pb}
\end{align}
\label{eq:opt}%
\end{subequations}
where each agent $i$ has private/local decisions denoted by $\bm{u}_i \in  \mathbb{R}^{n_i^{\mathrm{p}}}$ and a shared decision denoted by $\bm{v}_{i}^j \in \mathbb{R}^{n_{\mathrm{s}}}$, for every neighbor $j \in \mathcal{N}_i$. The vector $\bm{v}_i$ collects all the shared decisions of agent $i$, i.e., $\bm{v}_i=[\bm{v}_{i}^j]_{j\in\mathcal{N}_i}$. For each agent $i$, the cost function in \eqref{eq:cost_f_t} is divided into two parts:  $f_i^{\mathrm{p}}$ and $f_i^{\mathrm{s}}$, which depend on $\bm{u}_{i}$ and $\bm{v}_{i}$, respectively.  Furthermore, the decisions of agent $i$ $(\bm{u}_i,\bm{v}_i)$  are constrained by the local set $\mathcal{C}_i$. Moreover, the shared decisions of agent $i$ are also coupled with the shared decisions of its neighbors through the equality constraints \eqref{eq:coup_pb}. 
Additionally, 
 we suppose that the following assumptions hold.
\begin{assumption}
	\label{as:strict_cvx_cost}
	The functions $f_i^{\mathrm{p}}:\mathbb{R}^{n_i^{\mathrm{p}}} \to \mathbb{R}$ and $f_i^{\mathrm{s}}:\mathbb{R}^{n_{\mathrm{s}}|\mathcal{N}_i|} \to \mathbb{R}$, for each $i\in\mathcal{N}$, are differentiable and convex. Moreover, $f_i^{\mathrm{p}}(\bm{u}_{i})$, for each $i\in\mathcal{N}$, is strongly convex with strong convexity constant, denoted by $m_i$. \eod
\end{assumption}
\begin{assumption}
	The set $\mathcal{C}_i$, for each $i\in\mathcal{N}$, is polyhedral and compact. \eod
	\label{as:comp_loc_set}
\end{assumption} 
\begin{assumption}
	The feasible set of Problem \eqref{eq:opt} is non-empty. \eod
	\label{as:nonempty_set}
\end{assumption}
\begin{remark}
	By Assumption \ref{as:strict_cvx_cost}, the cost function is continuous. Based on the Weierstrass theorem, since the problem is feasible (Assumption \ref{as:nonempty_set}) and  $\mathcal{C}_i$, for each $i\in\mathcal{N}$, is compact, the optimal value is finite and the problem has a solution. \eod
\end{remark}
\begin{remark}
As practical examples, we refer to \cite{kar2014,hug2015,baker2016,kargarian2018,sousa2019,ananduta2020} for energy management problems that consider the same problem structure, i.e., polyhedral and compact local set constraints and edge-based coupling constraints. \eod 
\end{remark}

\subsection{Time-varying communication and asynchronicity}
We model the communication network as a random graph \cite{wei2013}. To that end, let the communication network be described as an undirected graph $\mathcal{G}^c(k) = (\mathcal{N},\mathcal{E}^c(k))$, where $\mathcal{E}^c(k) \subseteq \mathcal{E}$ denotes the set of communication links that are active at iteration $k{-1}$, i.e., $\{i,j\} \in \mathcal{E}^c(k)$ means that agents $i$ and $j$ can exchange information between each other. Thus, the random model of the communication network is defined in Assumption \ref{as:prob_link}.
\begin{assumption}
	The set $\mathcal{E}^c(k)$ is an independent and identically distributed  random variable. Furthermore, any communication link between two coupled agents $i$ and $j$, where $\{i,j\} \in \mathcal{E}$, is active with a positive probability denoted by $\beta_{ij}$, i.e., $\mathbb{P}\left(\{i,j\} \in \mathcal{E}^c(k)\right)=\beta_{ij}>0.$ \eod
	\label{as:prob_link}
\end{assumption}

Moreover, we also allow asynchronous updates, i.e., not all agents might update their decisions at each iteration. The asynchronous updates are also modeled as a random process, as follows. Denote the set of agents that are active and update their primal and dual variables at iteration $k{-1}$ by $\mathcal{A}(k)$. Then, we consider the following assumption.
\begin{assumption}
	The set $\mathcal{A}(k) \subseteq \mathcal{N}$ is an independent and identically distributed random variable. Moreover, an agent $i \in \mathcal{N}$ is active and updates its primal and dual variables at  iteration $k$ with a positive probability denoted by $\gamma_{i}$, i.e., $\mathbb{P}\left(i \in \mathcal{A}(k)\right)=\gamma_{i}>0.$\eod
	\label{as:act_agent}
\end{assumption}

\section{Proposed method}
\label{sec:prop_al}

\subsection{Algorithm design}

We consider the augmented problem of \eqref{eq:opt} in the following form:
\begin{align}
&\underset{(\bm{u}_i,\bm{v}_i) \in \mathcal{C}_i,i\in\mathcal{N}}{\text{minimize}} \  \sum_{i=1}^{n} \left(f_i^{\mathrm{p}}(\bm{u}_{i}) + f_i^{\mathrm{s}}(\bm{v}_{i})+\sum_{j\in \mathcal{N}_i}\|\bm{v}_{i}^j + \bm{v}_{j}^i\|_2^2\right) \notag\\
&\text{subject to } \bm{v}_{i}^j + \bm{v}_{j}^i = 0, \quad \forall j\in\mathcal{N}_i, \ i \in \mathcal{N}. \label{eq:opt2}
\end{align}
We use the dual approach to decompose Problem \eqref{eq:opt2}. To this end,  we denote the decisions of all agents by $\bm{u}=[\bm{u}_i]_{i\in\mathcal{N}}$ and  $\bm{v}=[\bm{v}_i]_{i\in\mathcal{N}}$ \color{black} and we introduce the Lagrangian of the augmented problem \eqref{eq:opt2}, denoted by $L(\bm{u},\bm{v},\bm{\lambda})$, as follows:
\begin{equation}
	\begin{aligned}
	L(\bm{u},\bm{v},\bm{\lambda}) &= \sum_{i\in\mathcal{N}}\Bigl(f_i^{\mathrm{p}}(\bm{u}_{i}) + f_i^{\mathrm{s}}(\bm{v}_{i})+\Bigr.\\
	&\qquad \Bigl. \sum_{j\in \mathcal{N}_i}\left(\langle \bm{\lambda}_i^j,\bm{v}_{i}^j+\bm{v}_{j}^i\rangle+\|\bm{v}_{i}^j + \bm{v}_{j}^i\|_2^2\right)\Bigr),
	\end{aligned}
	\label{eq:aug_Lagr}
\end{equation}
where the coupled constraints in \eqref{eq:coup_pb} are relaxed and $\bm{\lambda}_i^j \in \mathbb{R}^{n_{\mathrm{s}}}$, for all $j\in\mathcal{N}_i$, are the Lagrange multipliers associated to them. Note that, for convenience,  the Lagrange multipliers are compactly written as $\bm{\lambda} = [\bm{\lambda}_i]_{i\in\mathcal{N}}$, where  $\bm{\lambda}_i=[\bm{\lambda}_i^j]_{j\in\mathcal{N}_i}$. Now, we introduce the dual function, denoted by $q(\bm{\lambda})$, as follows:
\begin{equation}
q(\bm{\lambda}) = \underset{(\bm{u}_i,\bm{v}_i) \in \mathcal{C}_i,i\in\mathcal{N}}{\text{minimize}} L(\bm{u},\bm{v},\bm{\lambda}).  \label{eq:dual_f}%
\end{equation}%
 Since each $\mathcal{C}_i$ is assumed to be compact (Assumption \ref{as:comp_loc_set}) and the Lagrangian function is continuous (Assumption \ref{as:strict_cvx_cost}), by the Weierstrass theorem it follows that a minimizer in \eqref{eq:dual_f} exists and the value $q(\bm{\lambda})$ is finite for every $\bm{\lambda}$. \color{black} Hence, the domain of $q(\bm{\lambda})$ is the entire space of $\bm{\lambda}$. We also know from the duality theory that $q(\bm{\lambda})$ is concave and continuous.
 
 \smallskip
The dual problem associated with \eqref{eq:opt2} is stated as follows: 
\begin{equation}
\underset{\bm{\lambda}}{\text{maximize}} \ q(\bm{\lambda}).
\label{eq:max_min_La}
\end{equation}
Note that the dual optimal value is finite. Furthermore, the strong duality holds and the set of dual optimal points is non-empty since, in the primal problem \eqref{eq:opt}, the cost function is convex and the constraints are linear \cite[Proposition 5.2.1]{bertsekas1995}. In other word, there exists a saddle point of the Lagrangian function $L(\bm{u},\bm{v},\bm{\lambda})$,  i.e., a point $(\bm{u}^{\star},\bm{v}^{\star},\bm{\lambda}^{\star}) \in \prod_{i\in\mathcal{N}} \mathcal{C}_i \times \mathbb{R}^{\sum_{i\in\mathcal{N}}n_s|\mathcal{N}_i|}$ such that, for any $(\bm{u},\bm{v}) \in \prod_{i\in\mathcal{N}} \mathcal{C}_i$ and $\bm{\lambda}\in\mathbb{R}^{\sum_{i\in\mathcal{N}}n_s|\mathcal{N}_i|}$, it holds that
\begin{equation}
	L(\bm{u}^{\star},\bm{v}^{\star},\bm{\lambda}) \leq L(\bm{u}^{\star},\bm{v}^{\star},\bm{\lambda}^{\star}) \leq L(\bm{u},\bm{v},\bm{\lambda}^{\star}).
	\label{eq:saddle_point}
\end{equation} 

The dual function $q(\bm{\lambda})$ has separable constraints and all the terms in the Lagrangian function are also separable, except for the quadratic term $\sum_{i\in \mathcal N}\sum_{j \in \mathcal N_i}\|\bm{v}_{i}^j + \bm{v}_{j}^i\|_2^2$. In this regard, each agent will use the information from its neighbors as a way to approximate that quadratic term and decompose $q(\bm{\lambda})$. 
For each agent $i\in\mathcal{N}$, denote by $\tilde{\bm{v}}_{j}^i$ the information associated to ${\bm{v}}_{j}^i$ from neighbor $j \in \mathcal{N}_i$. Thus, the minimization on the right hand side of \eqref{eq:dual_f} is approximated by
\begin{align}
&\underset{(\bm{u}_i,\bm{v}_i) \in \mathcal{C}_i,i\in\mathcal{N}}{\text{minimize }} \sum_{i\in\mathcal{N}}\Bigl(f_i^{\mathrm{p}}(\bm{u}_{i}) + f_i^{\mathrm{s}}(\bm{v}_{i})+\Bigr. \notag\\
&\qquad \qquad \qquad \Bigl.\sum_{j\in \mathcal{N}_i}\left(\langle \bm{\lambda}_i^j+\bm{\lambda}_j^i,\bm{v}_{i}^j\rangle+\|\bm{v}_{i}^j + \tilde{\bm{v}}_{j}^i\|_2^2\right)\Bigr). \label{eq:q_tilde}	
\end{align}


\begin{algorithm}[t]
\begin{alg}Distributed augmented Lagrangian\hfill
	\label{alg:std}
\smallskip
\hrule
\smallskip
	\noindent	\textbf{Initialization:} For each agent $i\in\mathcal{N}$, $\bm{v}_i(0)=\bm{v}_{i0} \in \mathbb{R}^{|\mathcal{N}_i| n_s}$ and $\bm{\lambda}_i(0) = \bm{\lambda}_{i0}  \in \mathbb{R}^{|\mathcal{N}_i| n_s}$.
	
	\noindent \textbf{Iteration:} For each agent $i\in \mathcal{N}$, 
	\begin{enumerate}
		\item Update $\bm{u}_i(k+1)$ and $\hat{\bm{v}}_i(k)$ according to
		\begin{align}
		&\left(\bm{u}_i{(k+1)},\hat{\bm{v}}_i{(k)}\right) \notag\\
		&= \arg \min_{(\bm{u}_i,\bm{v}_i) \in \mathcal{C}_i} f_i^{\mathrm{p}}(\bm{u}_{i})+f_i^{\mathrm{s}}(\bm{v}_{i})+ \label{eq:primal_uv_up_agl}\\
		&\qquad \sum_{j\in \mathcal{N}_i} \left(  \langle\bm{\lambda}_i^j(k)+\bm{\lambda}_j^i(k),\bm{v}_{i}^j\rangle + \|\bm{v}_{i}^j + \bm{v}_{j}^i(k) \|_2^2\right). \notag
		\end{align}
		\item Update ${\bm{v}}_{i}^j(k+1)$, for all $j\in\mathcal{N}_i$, as follows:
		\begin{equation}
			{\bm{v}}_{i}^j(k+1) = \eta_i^j\hat{\bm{v}}_i^j{(k)} + \left(1-\eta_i^j\right){\bm{v}}_i^j{(k)}.
			\label{eq:v_hat_up_agl}
		\end{equation}
		\color{black}
		\item Send ${\bm{v}}_{i}^j(k+1)$ to and receive ${\bm{v}}_{j}^i(k+1)$ from neighbors $j\in\mathcal{N}_i$.
		\item Update the dual variables $\bm{\lambda}_i^j(k+1)$, for all $j\in\mathcal{N}_i$, according to
		\begin{equation}
		 \bm{\lambda}_i^j(k+1) = \bm{\lambda}_i^j(k) + {\eta_{i}^j} \left(\bm{v}_{i}^j(k+1) + \bm{v}_{j}^i(k+1)\right).
		\label{eq:dual_l_up_agl}
		\end{equation}
		\item Send $\bm{\lambda}_i^j(k+1)$ to and receive $\bm{\lambda}_j^i(k+1)$ from neighbors $j\in\mathcal{N}_i$.
	\end{enumerate}
\end{alg}
\end{algorithm}

We are in the position to state the proposed distributed approach, which is shown in Algorithm \ref{alg:std}. In step 1 of Algorithm \ref{alg:std}, each agent updates the local decisions $\bm{u}_i(k+1)$ and an auxiliary variable, which is denoted by $\hat{\bm{v}}_i(k)$ and used to update the shared decisions, by solving the decomposed  problem \eqref{eq:q_tilde}, \color{black} where $\tilde{\bm{v}}_j^i=\bm{v}_j^i(k)$. 
Then, the update of $\bm{v}_i(k+1)$ by \eqref{eq:v_hat_up_agl}, where ${\eta_i^j} \in (0,1)$, uses a convex combination of $\hat{\bm{v}}_i(k)$ and the value at the previous iteration $\bm{v}_i(k)$. Meanwhile, the dual variables are updated by \eqref{eq:dual_l_up_agl}, using the step size $\eta_i^j$, for all $j\in\mathcal{N}_i$. 
We will discuss the choice of ${\eta_i^j}$ later in the convergence analysis. 

Now, we consider the time-varying nature of the communication network. 
Based on Assumptions \ref{as:prob_link} and \ref{as:act_agent}, agent $i$ can only exchange information to its neighbor $j \in\mathcal{N}_i$ if both agents are active and the link $\{i,j\}$ is also active. In this regard, for each agent $i$, we denote the set of coupled neighbors with which agent $i$ can exchange information by $\mathcal{A}_i(k) = \{j\in \mathcal{N}_i\cap\mathcal{A}(k):\{i,j\}\in\mathcal{E}^c(k) \}$. In this situation, an active agent $i \in \mathcal{A}(k+1)$ might not have $\bm{v}_j^i(k)$ and $\bm{\lambda}_j^i(k)$ at iteration $k$. Therefore, it needs to track $\bm{v}_j^i(k)$  and $\bm{\lambda}_j^i(k)$. In this regard, this information is captured by the auxiliary variables $\bm{z}_i^j(k)$ and $\bm{\xi}_i^j(k)$, for all $j\in\mathcal{N}_i$, respectively. \color{black}  
The proposed distributed method follows  Algorithm \ref{alg_tv}.
\begin{algorithm}[t]
\begin{alg} Distributed augmented Lagrangian with imperfect communication\hfill
\smallskip
\hrule
\smallskip 	
\noindent	\textbf{Initialization:} For each agent $i\in\mathcal{N}$, $\bm{v}_i(0)=\bm{v}_{i0} \in \mathbb{R}^{|\mathcal{N}_i| n_s}$ and $\bm{\lambda}_i(0) = \bm{\lambda}_{i0}  \in \mathbb{R}^{|\mathcal{N}_i| n_s}$. Moreover, $\bm{z}_i^j(0) = \bm{v}_{j}^i(0)$ and $\bm{\xi}_i^j(0) = \bm{\lambda}_{j}^i (0)$, for all $j\in\mathcal{N}_i$ and $i\in\mathcal{N}$.
	
\noindent	\textbf{Iteration:}
	For each agent $i\in\mathcal{A}(k+1)$, 
	\label{alg_tv}
	\begin{enumerate}
		\item 
		Update $\left(\bm{u}_i{(k+1)},\hat{\bm{v}}_i{(k)}\right)$ according to
		\begin{align}
		&\left(\bm{u}_i{(k+1)},\hat{\bm{v}}_i{(k)}\right) \notag \\
		&= \arg \min_{(\bm{u}_i,\bm{v}_i) \in \mathcal{C}_i} f_i^{\mathrm{p}}(\bm{u}_{i})+f_i^{\mathrm{s}}(\bm{v}_{i}) +\label{eq:primal_uv_up_agl_tv}\\
		&\quad  \sum_{j\in \mathcal{N}_i}\left(   \langle\bm{\lambda}_i^j(k)+\bm{\xi}_i^j(k),\bm{v}_{i}^j\rangle+\|\bm{v}_{i}^j + \bm{z}_{i}^j(k) \|_2^2\right). \notag
		\end{align}
	\item Update ${\bm{v}}_{i}^j(k+1)$, for all $j\in\mathcal{N}_i$, as follows:
	\begin{align}
	&\bm{v}_i^j(k+1) = \begin{cases}
	\eta_i^j\hat{\bm{v}}_i^j{(k)} + \left(1-\eta_i^j\right){\bm{v}}_i^j{(k)},\\ \qquad \quad \ \forall j \in \mathcal{A}_i(k+1),\\
	\bm{v}_i^j(k), \quad \text{otherwise}.
	\end{cases}\label{eq:v_hat_up_agl_tv}
	\end{align}
	\item Send ${\bm{v}}_{i}^j(k+1)$ to and receive ${\bm{v}}_{j}^i(k+1)$ from neighbor $j\in\mathcal{A}_i(k+1)$.
	\item Update the auxiliary and dual variables $\bm{z}_i(k+1)$ and $\bm{\lambda}_i(k+1)$ according to:
	\begin{align}
	&\bm{z}_i^j(k+1) = \begin{cases}
	\bm{v}_{j}^i(k+1),  \quad \forall j \in \mathcal{A}_i(k+1),\\
	\bm{z}_i^j(k),\qquad \quad \text{otherwise},
	\end{cases}\label{eq:z_up_agl_tv}\\
	&\bm{\lambda}_i^j(k+1) 
	= \begin{cases}
	\bm{\lambda}_i^j(k) + {\eta_i^j}\hspace{-2pt}\left(\bm{v}_{i}^j(k+1) + \bm{z}_{i}^j(k+1)\right)\hspace{-2pt},\\
	\qquad \qquad \forall j \in \mathcal{A}_i(k+1),\\
	\bm{\lambda}_i^j(k), \quad \text{otherwise}. 
	\end{cases} \label{eq:dual_l_up_agl_tv}
	\end{align}
	\item Send $\bm{\lambda}_i^j(k+1)$ to and receive $\bm{\lambda}_j^i(k+1)$ from neighbors $j\in\mathcal{A}_i(k+1)$.
	\item Update the auxiliary variable $\bm{\xi}_i(k+1)$ according to
	\begin{equation}
		\bm{\xi}_i^j(k+1) = \begin{cases}
		\bm{\lambda}_{j}^i(k+1),  \quad \forall j \in \mathcal{A}_i(k+1),\\
		\bm{\xi}_i^j(k),\qquad \quad \text{otherwise}.
		\end{cases}\label{eq:xi_up_agl_tv}
	\end{equation}
	\end{enumerate}
	\vspace{-5pt}
	For agent $i\notin\mathcal{A}(k+1)$, $\bm{u}_i{(k+1)}=\bm{u}_i{(k)}$,  $\bm{v}_i{(k+1)}=\bm{v}_i{(k)}$, $\bm{z}_i(k+1)=\bm{z}_i(k)$,  $\bm{\lambda}_i(k+1)=\bm{\lambda}_i(k)$, and $\bm{\xi}_i(k+1)=\bm{\xi}_i(k)$. 
\end{alg}
\end{algorithm}
\begin{remark}
	\label{rem:init}
	In order to initialize the auxiliary variables $\bm{z}_i(0)$ and $\bm{\xi}_i(0)$, either agent $i \in \mathcal{N}$ receives $\bm{v}_j^i(0)$ and $\bm{\lambda}_j^i(0)$ from all neighbors $j\in\mathcal{N}_i$ or it is set such that, for each $i\in\mathcal{N}$, $\bm{v}_i(0)=\bm{z}_i(0)=v_0 \mathds{1}_{|\mathcal{N}_i|n_s}$ and $\bm{\lambda}_i(0)=\bm{\xi}_i(0)=\lambda_0 \mathds{1}_{|\mathcal{N}_i|n_s}$, for any $v_0,\lambda_0 \in \mathbb{R}$. \eod
\end{remark}
\begin{remark}
	The case where the algorithm is performed under perfect communication, as stated in Algorithm \ref{alg:std}, can be considered as a special case of Algorithm \ref{alg_tv} where $\beta_{ij}=1$, for all $\{i,j\}\in\mathcal{E}$ and $\gamma_{i}=1$, for all $i\in\mathcal{N}$.\eod 
\end{remark}

\subsection{Convergence statement}
The convergence of the sequence produced by Algorithm \ref{alg_tv} is stated in Theorem \ref{th:conv}, as follows.
\begin{theorem}
	\label{th:conv}
	Let Assumptions \ref{as:strict_cvx_cost}-\ref{as:act_agent} hold. Furthermore, let the sequence $\{(\bm{u}(k),\bm{v}(k),\hat{\bm{v}}(k),\bm{\lambda}(k))\}$ be generated by Algorithm \ref{alg_tv}. 
	If $\eta_i^j=\eta_j^i=\eta_{ij} \in \left(0,\frac{1}{4}\right)$, for all $j\in\mathcal N_i$ and $i\in\mathcal N$, then, with probability 1,
	\begin{enumerate}[label=\alph*.]
		\item (Feasibility)  $\lim_{k\to\infty} \|\bm{v}_i^j(k)+\bm{v}_j^i(k)\|_2^2 = 0$, for all $j\in\mathcal{N}_i$ and $i\in \mathcal{N}$,
		\item (Primal and dual variable convergence) There exists a saddle point of $L(\bm{u},\bm{v},\bm{\lambda})$  (see \eqref{eq:aug_Lagr}), denoted by $(\bm{u}^{\star},\bm{v}^{\star},\bm{\lambda}^{\star})$, such that $\lim_{k\to\infty} \bm{u}(k)= \bm{u}^{\star}$, $\lim_{k\to\infty} \bm{v}(k)= \bm{v}^{\star}$, and $\lim_{k\to\infty} \bm{\lambda}(k)=\bm{\lambda}^{\star}$. \eod 
	\end{enumerate}
\end{theorem}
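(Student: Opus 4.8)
The plan is to derive an almost-sure Robbins--Siegmund (supermartingale) inequality for a probability-weighted Lyapunov function measuring the distance of $(\bm{v}(k),\bm{\lambda}(k))$ to a fixed saddle point, and then to read off feasibility, the vanishing of the per-step dissipation, and finally primal--dual convergence from it.

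First I would record a structural invariant that removes the ``imperfect communication'' bookkeeping. With the stated initialization and the rules \eqref{eq:z_up_agl_tv}, \eqref{eq:xi_up_agl_tv}, one checks by induction that $\bm{z}_i^j(k)=\bm{v}_j^i(k)$ and $\bm{\xi}_i^j(k)=\bm{\lambda}_j^i(k)$ for all $k$, $j\in\mathcal{N}_i$, $i\in\mathcal{N}$. The reason is that an inactive agent freezes all of its variables, so the event $j\in\mathcal{A}_i(k+1)$ under which agent $i$ reads from $j$ coincides exactly with the (symmetric) event under which $\bm{v}_j^i$ and $\bm{\lambda}_j^i$ actually change; hence the stored copies are never stale. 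Consequently subproblem \eqref{eq:primal_uv_up_agl_tv} reduces to \eqref{eq:primal_uv_up_agl}, and Algorithm \ref{alg_tv} is exactly a randomized block-coordinate version of Algorithm \ref{alg:std}, in which edge $\{i,j\}$ performs its convex-combination and dual step only on the event that it is ``effectively active,'' an event of positive probability $p_{ij}:=\mathbb{P}\bigl(i,j\in\mathcal{A}(k+1),\,\{i,j\}\in\mathcal{E}^c(k+1)\bigr)>0$ by Assumptions \ref{as:prob_link}--\ref{as:act_agent}.

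Next I would build the deterministic one-step estimate, treating the active edges as if the method were synchronous. From the first-order optimality condition of the strongly convex subproblem \eqref{eq:primal_uv_up_agl_tv} I would write a variational inequality for $(\bm{u}_i(k+1),\hat{\bm{v}}_i(k))$ and pair it against the saddle conditions \eqref{eq:saddle_point} for $(\bm{u}^\star,\bm{v}^\star,\bm{\lambda}^\star)$; Definition \ref{def:strong_cvx} then produces a dissipation term $-2m_i\|\bm{u}_i(k+1)-\bm{u}_i^\star\|_2^2$. Expanding $\|\bm{\lambda}_i^j(k+1)-\bm{\lambda}_i^{j\star}\|_2^2$ and $\|\bm{v}_i^j(k+1)-\bm{v}_i^{j\star}\|_2^2$ via \eqref{eq:v_hat_up_agl_tv}, \eqref{eq:dual_l_up_agl_tv} and the invariant $\bm{z}=\bm{v}$, I would assemble the candidate
\begin{equation*}
V(k)=\sum_{i\in\mathcal N}\sum_{j\in\mathcal N_i}\frac{1}{p_{ij}}\Bigl(\tfrac{1}{\eta_{ij}}\|\bm{\lambda}_i^j(k)-\bm{\lambda}_i^{j\star}\|_2^2+\|\bm{v}_i^j(k)-\bm{v}_i^{j\star}\|_2^2\Bigr),
\end{equation*}
the inverse-probability weights being chosen so that the conditional expectation below yields a clean descent. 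The symmetry $\eta_i^j=\eta_j^i$ is what makes the dual cross-terms telescope, and the quadratic penalty supplies the curvature in $\bm{v}$ that the merely convex $f_i^{\mathrm{s}}$ lacks. I expect the crux of the whole proof to lie here: collecting the cross-terms generated by approximating the exact penalty $\|\bm{v}_i^j+\bm{v}_j^i\|_2^2$ by $\|\bm{v}_i^j+\bm{v}_j^i(k)\|_2^2$ together with the primal--dual coupling, and absorbing them by Young's inequality so that the residual is nonpositive, which is precisely where the threshold $\eta_{ij}\in(0,\tfrac14)$ is forced.

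Finally I would pass to the stochastic statement. Conditioning on the history $\mathcal F_k$ and using that $\mathcal A(k+1),\mathcal E^c(k+1)$ are i.i.d.\ and independent of $\mathcal F_k$, each edge contributes its one-step change with probability $p_{ij}$ and each agent with probability $\gamma_i$; the inverse-probability weights cancel these factors and give
\begin{equation*}
\mathbb{E}[V(k+1)\mid\mathcal F_k]\le V(k)-\sum_{i\in\mathcal N}\sum_{j\in\mathcal N_i}\alpha_{ij}\bigl(\|\hat{\bm{v}}_i^j(k)-\bm{v}_i^j(k)\|_2^2+\|\bm{v}_i^j(k)+\bm{v}_j^i(k)\|_2^2\bigr)-\sum_{i\in\mathcal N}2m_i\gamma_i\|\bm{u}_i(k+1)-\bm{u}_i^\star\|_2^2,
\end{equation*}
with $\alpha_{ij}>0$ whenever $\eta_{ij}<\tfrac14$. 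Since the subtracted terms are nonnegative, the Robbins--Siegmund theorem gives that $V(k)$ converges a.s.\ and that the three sums are summable a.s.; summability forces $\|\bm{v}_i^j(k)+\bm{v}_j^i(k)\|_2\to0$ (part a) and $\hat{\bm{v}}_i^j(k)-\bm{v}_i^j(k)\to0$, while strong convexity yields $\bm{u}(k)\to\bm{u}^\star$ (using $\gamma_i>0$ so each agent is active infinitely often). Boundedness of $\{(\bm{v}(k),\bm{\lambda}(k))\}$, inherited from convergence of $V$, then lets me take a convergent subsequence whose limit, by the vanishing dissipation and continuity of the gradients, satisfies the KKT/saddle conditions; re-centering $V$ at that saddle point and invoking its a.s.\ convergence upgrades the subsequential limit to a full limit, establishing part b.
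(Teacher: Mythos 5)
Your overall architecture coincides with the paper's proof almost step for step: the invariant $\bm{z}_i^j(k)=\bm{v}_j^i(k)$, $\bm{\xi}_i^j(k)=\bm{\lambda}_j^i(k)$ (so that \eqref{eq:primal_uv_up_agl_tv} reduces to \eqref{eq:primal_uv_up_agl}) is exactly what the paper exploits; your inverse-probability weights $1/p_{ij}$ with $p_{ij}=\beta_{ij}\gamma_i\gamma_j$ reproduce the paper's $\tilde H$ with $\alpha_{ij}=\beta_{ij}\gamma_i\gamma_j$ in \eqref{eq:Vk_tilde}; and your endgame (a.s.\ boundedness, a convergent subsequence, identification of its limit as a saddle point by passing to the limit in the argmin, then re-centering the Lyapunov function at that saddle point to upgrade to full-sequence convergence) is precisely the paper's Section V argument. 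Indeed, your Robbins--Siegmund step is slightly cleaner than the paper's Markov-inequality passage, which as written only gives convergence in probability for the dissipation terms.

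However, there is a genuine gap exactly at the step you defer as ``the crux'': your Lyapunov candidate measures the distance of the \emph{raw} multiplier $\bm{\lambda}_i^j(k)$ to $\bm{\lambda}_i^{j\star}$, whereas the paper's function \eqref{eq:Vk} uses the residual-shifted multiplier $\tilde{\bm{\lambda}}_i^j(k)=\bm{\lambda}_i^j(k)+(1-\eta_i^j)(\bm{v}_i^j(k)+\bm{v}_j^i(k))$ of \eqref{eq:lambda_bar} (stochastically, $\bm{\nu}_i^j(k)$ with $\bm{z}_i^j(k)$ in place of $\bm{v}_j^i(k)$). This shift is not cosmetic: it gives the clean recursion $\tilde{\bm{\lambda}}_i^j(k+1)=\tilde{\bm{\lambda}}_i^j(k)+\eta_{ij}(\hat{\bm{v}}_i^j(k)+\hat{\bm{v}}_j^i(k))$, whose cross term $\langle\tilde{\bm{\lambda}}_i^j(k)-\bm{\lambda}_i^{j\star},\hat{\bm{v}}_i^j(k)+\hat{\bm{v}}_j^i(k)\rangle$ is exactly the quantity bounded by the variational estimate \eqref{eq:ineq_opt7}. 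With the raw $\bm{\lambda}$, combining \eqref{eq:v_hat_up_agl_tv} and \eqref{eq:dual_l_up_agl_tv} yields $\bm{\lambda}_i^j(k+1)=\bm{\lambda}_i^j(k)+\eta_{ij}^2(\hat{\bm{v}}_i^j(k)+\hat{\bm{v}}_j^i(k))+\eta_{ij}(1-\eta_{ij})(\bm{v}_i^j(k)+\bm{v}_j^i(k))$, so the square expansion produces the sign-indefinite term $2\eta_{ij}(1-\eta_{ij})\langle\bm{\lambda}_i^j(k)-\bm{\lambda}_i^{j\star},\bm{v}_i^j(k)+\bm{v}_j^i(k)\rangle$, which Young's inequality cannot absorb: the available dissipation from the optimality/saddle estimate is only in $\|\hat{\bm{v}}_i^j(k)-\bm{v}_i^j(k)\|_2^2$ and $\|\hat{\bm{v}}_i^j(k)+\hat{\bm{v}}_j^i(k)\|_2^2$, and there is no a priori bound on $\|\bm{\lambda}(k)-\bm{\lambda}^{\star}\|_2$. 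Relatedly, the per-step dissipation $-\alpha_{ij}\|\bm{v}_i^j(k)+\bm{v}_j^i(k)\|_2^2$ you assert in your conditional inequality is not what the descent produces; one obtains the two $\hat{\bm{v}}$-terms above and recovers feasibility (part a) only afterwards via the triangle inequality, and the threshold $\eta_{ij}<\tfrac14$ arises as $\eta_{ij}-(2\eta_{ij})^2>0$ in the coefficient of $\|\hat{\bm{v}}_i^j(k)+\hat{\bm{v}}_j^i(k)\|_2^2$, not from absorbing your cross term. So the plan as stated would fail at its central inequality; the missing idea is precisely the shifted-multiplier Lyapunov function.
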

\begin{proof}
	See Section \ref{sec:pf_th_conv}.
\end{proof}

Notice that,	if the dual variables $\bm{\lambda}_i(0)$, for all $i\in\mathcal{N}$, are initialized such that  $\bm{\lambda}_{i0}^j=\bm{\lambda}_{j0}^i$, then, we have that $\bm{\lambda}_i^j(k) = \bm{\lambda}_j^i(k)$, for all $k\geq0$, since $\eta_i^j=\eta_j^i=\eta_{ij}$. In this setup, the second round of communication (Step 5) in Algorithm \ref{alg_tv} is not necessary and each agent $i\in\mathcal{A}(k+1)$ can update $\bm{\xi}_i^j(k+1)=\bm{\lambda}_i^j(k+1)$, for all $j\in\mathcal{A}_i(k+1)$, and $\bm{\xi}_i^j(k+1)=\bm{\xi}_i^j(k)$, otherwise.

We also state the convergence rate of Algorithm \ref{alg_tv} in terms of the ergodic average of the primal and auxiliary variables, which are defined, 
for all $i\in\mathcal{N}$ and $k\geq 1$, as follows:
\begin{equation}
\bar{\bm{u}}_i({k})=\sum_{\ell=0}^{{k}-1}\frac{\bm{u}_i(\ell)}{{k}}, \ 
\bar{\bm{v}}_i({k})=\sum_{\ell=0}^{{k}-1}\frac{\bm{v}_i(\ell)}{{k}}, \
\bar{\hat{\bm{v}}}_i({k})=\sum_{\ell=0}^{{k}-1}\frac{\hat{\bm{v}}_i(\ell)}{{k}}, \
\label{eq:erg_ave}
\end{equation}
\begin{theorem} 
	\label{th:rate}
	Let Assumptions \ref{as:strict_cvx_cost}-\ref{as:act_agent} hold. Furthermore, let the sequence $\{(\bm{u}(k),\bm{v}(k),\hat{\bm{v}}(k),\bm{\lambda}(k))\}$ be generated by Algorithm \ref{alg_tv} with $\eta_i^j=\eta_j^i=\eta_{ij} \in \left(0,\frac{1}{4}\right)$, for all $j\in\mathcal N_i$. 
	Then, the ergodic average of the primal variables \eqref{eq:erg_ave} converge to a solution to Problem \eqref{eq:opt} with the convergence rate $\mathcal{O}(\frac{1}{k})$. 
\end{theorem}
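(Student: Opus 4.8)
The plan is to obtain the $\mathcal{O}(1/k)$ rate as an almost immediate consequence of a single one-step ``descent'' inequality for a suitably weighted stochastic Lyapunov function, and then to pass from the resulting summability of the per-iteration gaps to the ergodic iterates \eqref{eq:erg_ave} by Jensen's inequality. Throughout I would work with the saddle point $(\bm u^\star,\bm v^\star,\bm\lambda^\star)$ guaranteed by strong duality, and I would exploit that at such a point the constraints \eqref{eq:coup_pb} hold, so the augmentation term in \eqref{eq:aug_Lagr} vanishes and $(\bm u^\star,\bm v^\star)$ simultaneously solves the original Problem~\eqref{eq:opt}.

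First I would derive the key one-step inequality. Starting from the first-order optimality condition of the local minimization \eqref{eq:primal_uv_up_agl_tv} for each active agent, I would use convexity of $f_i^{\mathrm p},f_i^{\mathrm s}$ (Definition~\ref{def:cvx}) and strong convexity of $f_i^{\mathrm p}$ (Definition~\ref{def:strong_cvx}) to compare the iterate $(\bm u_i(k{+}1),\hat{\bm v}_i(k))$ with $(\bm u^\star,\bm v^\star)$. Combining the resulting variational inequality with the convex-combination update \eqref{eq:v_hat_up_agl_tv} and the dual update \eqref{eq:dual_l_up_agl_tv}, and choosing the probability-weighted potential
$$\Phi(k)=\sum_{i\in\mathcal N}\sum_{j\in\mathcal N_i}\frac{1}{\eta_{ij}\,p_{ij}}\Bigl(\|\bm v_i^j(k)-\bm v_i^{j\star}\|_2^2+\|\bm\lambda_i^j(k)-\bm\lambda_i^{j\star}\|_2^2\Bigr),$$
where $p_{ij}$ denotes the probability that edge $\{i,j\}$ is updated (so $p_{ij}=\gamma_i\gamma_j\beta_{ij}$ under Assumptions~\ref{as:prob_link} and~\ref{as:act_agent}), I would take the conditional expectation with respect to the natural filtration $\mathcal F_k$. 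Because $\mathcal A(k{+}1)$ and $\mathcal E^c(k{+}1)$ are i.i.d.\ and independent of the past, an edge contributes to $\Phi(k{+}1)-\Phi(k)$ only when it is updated, and the factor $p_{ij}$ produced by the expectation cancels the weight $1/p_{ij}$. This yields
$$\mathbb E\bigl[\Phi(k{+}1)\mid\mathcal F_k\bigr]\le \Phi(k)-\Delta(k),$$
where $\Delta(k)\ge 0$ collects the Lagrangian gap $L(\bm u(k),\bm v(k),\bm\lambda^\star)-L(\bm u^\star,\bm v^\star,\bm\lambda^\star)$ together with the feasibility residual $\sum_{i}\sum_{j\in\mathcal N_i}\|\bm v_i^j(k)+\bm v_j^i(k)\|_2^2$. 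This is the same supermartingale inequality that underlies the almost-sure convergence of Theorem~\ref{th:conv}, and the step-size restriction $\eta_{ij}\in(0,\tfrac14)$ is exactly what forces $\Delta(k)\ge 0$.

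Next I would take total expectation and telescope over $\ell=0,\dots,k{-}1$. Since $\Phi(k)\ge 0$, this gives
$$\sum_{\ell=0}^{k-1}\mathbb E[\Delta(\ell)]\le \Phi(0),$$
so both the averaged Lagrangian gap and the averaged squared residual are $\mathcal O(1/k)$. Finally I would convert these averages into statements about the ergodic iterates. Since $f_i^{\mathrm p},f_i^{\mathrm s}$ are convex, the map $(\bm u,\bm v)\mapsto L(\bm u,\bm v,\bm\lambda^\star)$ is convex, so Jensen's inequality yields
$$L\bigl(\bar{\bm u}(k),\bar{\bm v}(k),\bm\lambda^\star\bigr)-L(\bm u^\star,\bm v^\star,\bm\lambda^\star)\le\frac1k\sum_{\ell=0}^{k-1}\bigl(L(\bm u(\ell),\bm v(\ell),\bm\lambda^\star)-L(\bm u^\star,\bm v^\star,\bm\lambda^\star)\bigr)=\mathcal O\!\left(\tfrac1k\right),$$
while convexity of $\|\cdot\|_2^2$ gives $\|\bar{\bm v}_i^j(k)+\bar{\bm v}_j^i(k)\|_2^2=\mathcal O(1/k)$. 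Using the lower saddle-point inequality in \eqref{eq:saddle_point} and the fact that the augmentation term is nonnegative and vanishes at feasibility, these two bounds jointly control the objective suboptimality of Problem~\eqref{eq:opt} evaluated at $(\bar{\bm u}(k),\bar{\bm v}(k))$, delivering the claimed $\mathcal O(1/k)$ rate.

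I expect the main obstacle to be the first step: producing the one-step inequality in the correct weighted norm so that the random activation and link probabilities cancel exactly, while simultaneously pinning down that $\eta_{ij}<\tfrac14$ makes $\Delta(k)$ nonnegative. The delicate point is that the local subproblem \eqref{eq:primal_uv_up_agl_tv} uses the \emph{stale} neighbour values $\bm z_i^j(k)$ and $\bm\xi_i^j(k)$ rather than the current $\bm v_j^i(k),\bm\lambda_j^i(k)$, so the cross terms that appear when expanding $\|\hat{\bm v}_i^j+\bm z_i^j\|_2^2$ and the dual step must be dominated by the strong-convexity margin $m_i$ and the descent produced by \eqref{eq:dual_l_up_agl_tv}; tracking these through the convex combination \eqref{eq:v_hat_up_agl_tv} is where the $1/4$ threshold is fixed. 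A secondary subtlety is that the ergodic point is not exactly feasible, so the objective-suboptimality estimate must be stated jointly with the $\mathcal O(1/k)$ feasibility residual rather than by directly evaluating the cost on a feasible point.
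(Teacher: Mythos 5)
Your scaffolding matches the paper's proof closely: the paper also uses a probability-weighted Lyapunov function (its $\tilde H$ carries exactly your weights, $(\alpha_{ij}\eta_{ij})^{-1}$ with $\alpha_{ij}=\beta_{ij}\gamma_i\gamma_j=p_{ij}$), cancels the activation probabilities through a conditional expectation (Lemma~\ref{le:V_tilde}), telescopes, and finishes with Jensen. But there are two genuine gaps. First, your potential $\Phi(k)$ is built from the raw multipliers $\bm\lambda_i^j(k)$, whereas the paper's Lyapunov function must use the shifted variables $\bm\nu_i^j(k)=\bm\lambda_i^j(k)+(1-\eta_{ij})(\bm v_i^j(k)+\bm z_i^j(k))$ (the imperfect-communication version of $\tilde{\bm\lambda}$ in \eqref{eq:lambda_bar}). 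Only this combination obeys the clean recursion $\tilde{\bm\lambda}_i^j(k+1)=\tilde{\bm\lambda}_i^j(k)+\eta_{ij}\bigl(\hat{\bm v}_i^j(k)+\hat{\bm v}_j^i(k)\bigr)$, which is what lets the expansion in Lemma~\ref{le:Vk_ub} pair against the estimate of Lemma~\ref{le:ineq_opt} and close with the negative coefficients $\tfrac32-\eta_{ij}$ and $\tfrac{\eta_{ij}-(2\eta_{ij})^2}{2}$ when $\eta_{ij}<\tfrac14$. With raw $\bm\lambda$, the dual update \eqref{eq:dual_l_up_agl_tv} injects $\bm v(k+1)$-terms whose cross products with $\bm\lambda_i^j(k)-\bm\lambda_i^{j\star}$ do not cancel, and the one-step descent inequality does not go through as you state it; you correctly flagged this step as the obstacle, but the fix is a specific change of dual variable, not just cross-term bookkeeping.

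Second, and more consequentially, your decrement $\Delta(k)$ is not what this analysis produces, and your endgame loses the rate. The paper's supermartingale decrement in \eqref{eq:ineq_supermart} consists of squared-norm quantities only --- $m_i\|\bm u_i(k+1)-\bm u_i^{\star}\|_2^2$, $\|\hat{\bm v}_i^j(k)-\bm v_i^j(k)\|_2^2$, $\|\hat{\bm v}_i^j(k)+\hat{\bm v}_j^i(k)\|_2^2$ --- obtained by discarding function values through the monotonicity inequalities of Definitions~\ref{def:cvx}--\ref{def:strong_cvx}; no Lagrangian gap appears. The paper then applies Jensen to the squared norm itself, concluding $\mathbb E\|\bar{\bm u}_i(k)-\bm u_i^{\star}\|_2^2=\mathcal O(1/k)$ together with the analogous residual bounds (inequality \eqref{eq:rate}), so the $\mathcal O(1/k)$ rate is measured in squared distance to a solution, for which strong convexity of $f_i^{\mathrm p}$ is essential. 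Your route instead keeps the Lagrangian gap and converts to objective suboptimality of Problem \eqref{eq:opt} at the ergodic point; even granting $L(\bar{\bm u}(k),\bar{\bm v}(k),\bm\lambda^{\star})-L(\bm u^{\star},\bm v^{\star},\bm\lambda^{\star})\le\mathcal O(1/k)$ and $\|\bar{\bm v}_i^j(k)+\bar{\bm v}_j^i(k)\|_2^2\le\mathcal O(1/k)$, the mixed term $\langle\bm\lambda^{\star},\bar{\bm v}_i^j(k)+\bar{\bm v}_j^i(k)\rangle$ is only $\mathcal O(1/\sqrt k)$, since the residual is $\mathcal O(1/\sqrt k)$ in norm. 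Consequently your two bounds control $|f(\bar{\bm u},\bar{\bm v})-f^{\star}|$ only at rate $\mathcal O(1/\sqrt k)$, not the claimed $\mathcal O(1/k)$. The paper sidesteps this entirely by never passing through objective values: telescoping plus Jensen on squared norms yields the $1/k$ rate directly in the metric of \eqref{eq:rate}.
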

\begin{proof}
	See Section \ref{sec:pf_th_rate}.
\end{proof}


\section{Convergence and rate analysis}
\label{sec:conv_an}
This section is devoted to proving Theorems \ref{th:conv} and \ref{th:rate}. Prior to that, we establish some intermediate results that are useful for proving these theorems. \color{black}
\vspace{-10pt}
\subsection{Intermediate Results}
\begin{lemma}
	\label{le:ineq_opt3}
	Let Assumptions \ref{as:strict_cvx_cost}-\ref{as:nonempty_set} hold. Furthermore, let $(\bm{u}_i(k+1),\hat{\bm{v}}_i(k))$ be the attainer of the local optimization in \eqref{eq:primal_uv_up_agl} and $(\bm{u}^{\star},\bm{v}^{\star},\bm{\lambda}^{\star})$ be a saddle point of $L(\bm{u},\bm{v},\bm{\lambda})$ as defined in \eqref{eq:aug_Lagr}. Then, it holds that
	\begin{equation}
	\begin{aligned}
	0  
	&\leq \sum_{i\in\mathcal{N}} \Big(  {-m_i}\|\bm{u}_i(k+1)-\bm{u}_i^{\star}\|_2^2   \Big.\\
	&\quad   + \sum_{j\in \mathcal{N}_i} \langle \bm{\lambda}_i^{j\star}-\bm{\lambda}_i^j(k), \hat{\bm{v}}_i^j(k)+\hat{\bm{v}}_j^i(k)\rangle \\
	&\quad   - \sum_{j\in \mathcal{N}_i} \| \hat{\bm{v}}_i^j(k) + \hat{\bm{v}}_j^i(k)\|_2^2\\
	&\quad  \Big. -2 \sum_{j\in \mathcal{N}_i} \langle {\bm{v}}_j^i(k) -  \hat{\bm{v}}_j^i(k), \hat{\bm{v}}_i^j(k)-\bm{v}_i^{j\star}\rangle\Big).
	\end{aligned}
	\label{eq:ineq_opt3}
	\end{equation}
\end{lemma}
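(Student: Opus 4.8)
The plan is to obtain \eqref{eq:ineq_opt3} by combining two first-order optimality conditions---one for the local subproblem \eqref{eq:primal_uv_up_agl} and one for the saddle point $(\bm{u}^{\star},\bm{v}^{\star},\bm{\lambda}^{\star})$---and then aggregating the resulting per-agent estimate over the network, exploiting the symmetry of the edge terms together with the primal feasibility $\bm{v}_i^{j\star}+\bm{v}_j^{i\star}=0$ that the saddle point inherits from \eqref{eq:saddle_point}.

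First I would write the variational inequality characterizing the minimizer $(\bm{u}_i(k+1),\hat{\bm{v}}_i(k))$ of the convex subproblem \eqref{eq:primal_uv_up_agl} over $\mathcal{C}_i$, and test it against the feasible point $(\bm{u}_i^{\star},\bm{v}_i^{\star})\in\mathcal{C}_i$:
\begin{align*}
&\langle \nabla f_i^{\mathrm{p}}(\bm{u}_i(k+1)),\,\bm{u}_i^{\star}-\bm{u}_i(k+1)\rangle \\
&\ +\sum_{j\in\mathcal{N}_i}\big\langle \nabla_{\bm{v}_i^j} f_i^{\mathrm{s}}(\hat{\bm{v}}_i(k))+\bm{\lambda}_i^j(k)+\bm{\lambda}_j^i(k)+2(\hat{\bm{v}}_i^j(k)+\bm{v}_j^i(k)),\,\bm{v}_i^{j\star}-\hat{\bm{v}}_i^j(k)\big\rangle \geq 0 .
\end{align*}
Since $(\bm{u}^{\star},\bm{v}^{\star})$ minimizes $L(\cdot,\cdot,\bm{\lambda}^{\star})$ over the product set $\prod_i\mathcal{C}_i$ by \eqref{eq:saddle_point}, and the gradient of the quadratic penalty in \eqref{eq:aug_Lagr} vanishes at the saddle point because $\bm{v}_i^{j\star}+\bm{v}_j^{i\star}=0$, the corresponding per-agent variational inequality, tested at $(\bm{u}_i(k+1),\hat{\bm{v}}_i(k))\in\mathcal{C}_i$, reduces to
\begin{align*}
\langle \nabla f_i^{\mathrm{p}}(\bm{u}_i^{\star}),\,\bm{u}_i(k+1)-\bm{u}_i^{\star}\rangle +\sum_{j\in\mathcal{N}_i}\big\langle \nabla_{\bm{v}_i^j} f_i^{\mathrm{s}}(\bm{v}_i^{\star})+\bm{\lambda}_i^{j\star}+\bm{\lambda}_j^{i\star},\,\hat{\bm{v}}_i^j(k)-\bm{v}_i^{j\star}\big\rangle \geq 0 .
\end{align*}
Adding the two inequalities, the $f_i^{\mathrm{p}}$-terms combine into $\langle \nabla f_i^{\mathrm{p}}(\bm{u}_i(k+1))-\nabla f_i^{\mathrm{p}}(\bm{u}_i^{\star}),\,\bm{u}_i^{\star}-\bm{u}_i(k+1)\rangle$, which is bounded above by $-m_i\|\bm{u}_i(k+1)-\bm{u}_i^{\star}\|_2^2$ by strong convexity (Definition \ref{def:strong_cvx}); the $f_i^{\mathrm{s}}$-gradient terms combine into $-\langle \nabla f_i^{\mathrm{s}}(\hat{\bm{v}}_i(k))-\nabla f_i^{\mathrm{s}}(\bm{v}_i^{\star}),\,\hat{\bm{v}}_i(k)-\bm{v}_i^{\star}\rangle\leq 0$ by convexity (Definition \ref{def:cvx}). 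Rearranging then yields the per-agent bound
\begin{align*}
0 &\leq -m_i\|\bm{u}_i(k+1)-\bm{u}_i^{\star}\|_2^2 +\sum_{j\in\mathcal{N}_i}\big\langle (\bm{\lambda}_i^j(k)-\bm{\lambda}_i^{j\star})+(\bm{\lambda}_j^i(k)-\bm{\lambda}_j^{i\star}),\,\bm{v}_i^{j\star}-\hat{\bm{v}}_i^j(k)\big\rangle \\
&\quad +2\sum_{j\in\mathcal{N}_i}\big\langle \hat{\bm{v}}_i^j(k)+\bm{v}_j^i(k),\,\bm{v}_i^{j\star}-\hat{\bm{v}}_i^j(k)\big\rangle .
\end{align*}

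Then I would sum this bound over $i\in\mathcal{N}$, i.e. over all directed edges $(i,j)$, and symmetrize each edge $\{i,j\}$ by pairing the contributions $(i,j)$ and $(j,i)$. The multiplier coefficient $(\bm{\lambda}_i^j(k)-\bm{\lambda}_i^{j\star})+(\bm{\lambda}_j^i(k)-\bm{\lambda}_j^{i\star})$ is invariant under the swap, so pairing and using $\bm{v}_i^{j\star}+\bm{v}_j^{i\star}=0$ collapses the vector it acts on to $-(\hat{\bm{v}}_i^j(k)+\hat{\bm{v}}_j^i(k))$, which reproduces the dual term $\sum_{i}\sum_{j}\langle \bm{\lambda}_i^{j\star}-\bm{\lambda}_i^j(k),\,\hat{\bm{v}}_i^j(k)+\hat{\bm{v}}_j^i(k)\rangle$ of \eqref{eq:ineq_opt3}. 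For the penalty term I would split $\hat{\bm{v}}_i^j(k)+\bm{v}_j^i(k)=(\hat{\bm{v}}_i^j(k)+\hat{\bm{v}}_j^i(k))+(\bm{v}_j^i(k)-\hat{\bm{v}}_j^i(k))$: the second piece gives the asymmetric term $-2\langle \bm{v}_j^i(k)-\hat{\bm{v}}_j^i(k),\,\hat{\bm{v}}_i^j(k)-\bm{v}_i^{j\star}\rangle$ directly, while the first piece, after edge-pairing and another use of $\bm{v}_i^{j\star}+\bm{v}_j^{i\star}=0$, turns $-2\langle \hat{\bm{v}}_i^j+\hat{\bm{v}}_j^i,\,\hat{\bm{v}}_i^j-\bm{v}_i^{j\star}\rangle$ into $-\|\hat{\bm{v}}_i^j(k)+\hat{\bm{v}}_j^i(k)\|_2^2$. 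Collecting all four groups gives exactly \eqref{eq:ineq_opt3}.

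The main obstacle will be the bookkeeping of the directed-edge summation: one must verify that the multiplier differences and the penalty vectors assemble into the symmetric quantities $\hat{\bm{v}}_i^j(k)+\hat{\bm{v}}_j^i(k)$ at precisely the right points, and that the factor-of-two cross term produced by pairing collapses cleanly into the single squared norm $\|\hat{\bm{v}}_i^j(k)+\hat{\bm{v}}_j^i(k)\|_2^2$ via feasibility. Everything else (the two variational inequalities, the strong-convexity and convexity estimates) is routine; the delicate point is the consistent separation into symmetric and asymmetric parts so that the quadratic term emerges with the correct sign and coefficient.
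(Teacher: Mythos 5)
Your proposal is correct and follows essentially the same route as the paper's proof: the same two variational inequalities (the local subproblem's at $(\bm{u}_i(k+1),\hat{\bm{v}}_i(k))$ tested against $(\bm{u}_i^{\star},\bm{v}_i^{\star})$, and the saddle-point minimization's tested at the iterate, with the penalty gradient vanishing by primal feasibility), the same strong-convexity/convexity estimates, and the same splitting $\hat{\bm{v}}_i^j(k)+\bm{v}_j^i(k)=(\hat{\bm{v}}_i^j(k)+\hat{\bm{v}}_j^i(k))+(\bm{v}_j^i(k)-\hat{\bm{v}}_j^i(k))$ followed by edge-pairing with $\bm{v}_i^{j\star}+\bm{v}_j^{i\star}=0$ to produce the dual term and the squared norm. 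The ``bookkeeping'' you flag as the delicate point is precisely what the paper carries out via its summation-over-links argument, so no gap remains.
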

\begin{proof}
		Since $(\bm{u}_i^{\star},\bm{v}_i^{\star}) \in \mathcal{C}_i$, the optimality condition \cite[Theorem 20]{nedic2008} of the local optimization in \eqref{eq:primal_uv_up_agl} yields the following relation:
	\begin{equation}
	\begin{aligned}
	0 &\leq \langle \nabla f_i^{\mathrm{p}}(\bm{u}_i(k+1)), \bm{u}_i^{\star} - \bm{u}_i(k+1)\rangle  \\
	&\quad +  \langle \nabla f_i^{\mathrm{s}}(\hat{\bm{v}}_i(k)), \bm{v}_i^{\star}-\hat{\bm{v}}_i(k)\rangle  \\
	&\quad  + \sum_{j\in \mathcal{N}_i} \langle \bm{\lambda}_i^j(k)+\bm{\lambda}_j^i(k) , \bm{v}_i^{j\star}-\hat{\bm{v}}_i^j(k)\rangle\\
	&\quad  + \sum_{j\in \mathcal{N}_i} 2\langle \hat{\bm{v}}_i^j(k) + {\bm{v}}_j^i(k), \bm{v}_i^{j\star}-\hat{\bm{v}}_i^j(k)\rangle . 		
	\end{aligned}
	\label{eq:opt_c1}
	\end{equation}
	Now, we consider the second inequality in \eqref{eq:saddle_point}, which implies that 
	$(\bm{u}^{\star},\bm{v}^{\star}) =\arg\min_{(\bm{u}_i,\bm{v}_i) \in \mathcal{C}_i,i\in\mathcal{N}} L(\bm{u},\bm{v},\bm{\lambda}^{\star}).$ 
	Based on the optimality condition of this minimization and the fact that $(\bm{u}_i(k+1),\hat{\bm{v}}_i(k)) \in \mathcal{C}_i$, we obtain that 
	\begin{equation}
	\begin{aligned}
	0 &\leq  \sum_{i\in\mathcal{N}} \Big( \langle\nabla f_i^{\mathrm{p}}(\bm{u}_i^{\star}), \bm{u}_i(k+1) - \bm{u}_i^{\star}\rangle  \Big. \\
	& \qquad  
	+ \langle \nabla f_i^{\mathrm{s}}(\bm{v}_i^{\star}), \hat{\bm{v}}_i(k)-\bm{v}_i^{\star}\rangle  \\
	&\qquad \quad  
	+ \sum_{j\in \mathcal{N}_i} \langle \bm{\lambda}_i^{j\star}+\bm{\lambda}_j^{i\star}, \hat{\bm{v}}_i^j(k)-\bm{v}_i^{j\star}\rangle. 
	\end{aligned}
	\label{eq:opt_c2}
	\end{equation}
\color{black}
	By summing up \eqref{eq:opt_c1} over all agents $i\in\mathcal{N}$ and combining with \eqref{eq:opt_c2}, we obtain that
	\begin{align}
	& 0 \leq  \sum_{i\in\mathcal{N}} \Big( \Big. \langle\nabla f_i^{\mathrm{p}}(\bm{u}_i^{\star})-\nabla f_i^{\mathrm{p}}(\bm{u}_i(k+1)), \bm{u}_i(k+1) - \bm{u}_i^{\star}\rangle   \notag\\
	& \qquad \quad + \langle \nabla f_i^{\mathrm{s}}(\bm{v}_i^{\star})-\nabla f_i^{\mathrm{s}}(\hat{\bm{v}}_i(k)), \hat{\bm{v}}_i(k)-\bm{v}_i^{\star}\rangle  \notag\\
	&\qquad \quad  + \sum_{j\in \mathcal{N}_i} \langle \bm{\lambda}_i^{j\star}+\bm{\lambda}_j^{i\star}-\bm{\lambda}_i^j(k)-\bm{\lambda}_j^i(k), \hat{\bm{v}}_i^j(k)-\bm{v}_i^{j\star}\rangle \notag \\
	&\qquad \quad  {-2} \sum_{j\in \mathcal{N}_i} \langle \hat{\bm{v}}_i^j(k) + {\bm{v}}_j^i(k), \hat{\bm{v}}_i^j(k)-\bm{v}_i^{j\star}\rangle\Big.\Big).\label{eq:ineq_opt}
	\end{align}	 
	Applying the convexity and strong convexity relations (cf. Definitions \ref{def:cvx}-\ref{def:strong_cvx} for $f_i^{\mathrm{s}}(\bm{v}_i)$ and $f_i^{\mathrm{p}}(\bm{u}_i)$, for all $i\in\mathcal{N}$, and adding the term $\sum_{i\in\mathcal{N}}\sum_{j\in \mathcal{N}_i}2\langle \hat{\bm{v}}_j^i(k)- \hat{\bm{v}}_j^i(k), \hat{\bm{v}}_i^j(k)-\bm{v}_i^{j\star}\rangle=0$  to \eqref{eq:ineq_opt}, it follows that
	\begin{align}
	0  
	& \leq \sum_{i\in\mathcal{N}} \Big(  {-m_i}\|\bm{u}_i(k+1)-\bm{u}_i^{\star}\|_2^2  \Big. \notag\\
	&\quad   + \sum_{j\in \mathcal{N}_i} \langle \bm{\lambda}_i^{j\star}+\bm{\lambda}_j^{i\star}-\bm{\lambda}_i^j(k)-\bm{\lambda}_j^i(k), \hat{\bm{v}}_i^j(k)-\bm{v}_i^{j\star}\rangle \notag\\
	&\quad   {-2} \sum_{j\in \mathcal{N}_i} \langle \hat{\bm{v}}_i^j(k) + \hat{\bm{v}}_j^i(k), \hat{\bm{v}}_i^j(k)-\bm{v}_i^{j\star}\rangle \notag\\
	&\quad \Big.  {-2} \sum_{j\in \mathcal{N}_i} \langle {\bm{v}}_j^i(k) -  \hat{\bm{v}}_j^i(k), \hat{\bm{v}}_i^j(k)-\bm{v}_i^{j\star}\rangle\Big).
	\label{eq:ineq_opt2}
	\end{align}
	
	Now, we consider the second term on the right-hand side of the inequality, \color{black}  i.e., $\sum_{i\in\mathcal{N}}\sum_{j\in \mathcal{N}_i} \langle \bm{\lambda}_i^{j\star}+\bm{\lambda}_j^{i\star}-\bm{\lambda}_i^j(k)-\bm{\lambda}_j^i(k), \hat{\bm{v}}_i^j(k)-\bm{v}_i^{j\star}\rangle.$ By considering the summation over all links and since at each link there exist two inner products associated to both agents coupled by that link, that term is equivalent to 
	$\sum_{i\in\mathcal{N}}\sum_{j\in \mathcal{N}_i} \langle \bm{\lambda}_i^{j\star}-\bm{\lambda}_i^j(k), \hat{\bm{v}}_i^j(k)+\hat{\bm{v}}_j^i(k)\rangle.$ 
	\color{black} 
	Similarly, the third term on the right-hand side of the inequality \eqref{eq:ineq_opt2}, i.e., $2\sum_{i\in\mathcal{N}}\sum_{j\in \mathcal{N}_i}\langle \hat{\bm{v}}_i^j(k) + \hat{\bm{v}}_j^i(k), \hat{\bm{v}}_i^j(k)-\bm{v}_i^{j\star}\rangle$ is equivalent to 
	$\sum_{\{i,j\}\in \mathcal{E}} 2\langle \hat{\bm{v}}_i^j(k) + \hat{\bm{v}}_j^i(k),\hat{\bm{v}}_i^j(k)+\hat{\bm{v}}_j^i(k)\rangle
	= \sum_{i\in\mathcal{N}}\sum_{j\in \mathcal{N}_i} \| \hat{\bm{v}}_i^j(k) + \hat{\bm{v}}_j^i(k)\|_2^2$. 
	Thus,  we obtain the desired inequality \eqref{eq:ineq_opt3}.
\end{proof}

Now, 
we define the auxiliary variables, $\tilde{\bm{\lambda}}_i^j(k)$, for all $j\in\mathcal{N}_i$ and $i\in\mathcal{N}$, as follows:  
\begin{equation}
\tilde{\bm{\lambda}}_i^j(k) = \bm{\lambda}_i^j(k) + (1-\eta_i^j)(\bm{v}_i^j(k)+\bm{v}_j^i(k)),
\label{eq:lambda_bar}
\end{equation}
and obtain a useful estimate in Lemma \ref{le:ineq_opt}. \color{black}
\begin{lemma}
	\label{le:ineq_opt}
	Let Assumptions \ref{as:strict_cvx_cost}-\ref{as:nonempty_set} hold. Furthermore, let $(\bm{u}_i(k+1),\hat{\bm{v}}_i(k))$ be the attainer of the local optimization in \eqref{eq:primal_uv_up_agl}, $(\bm{u}^{\star},\bm{v}^{\star},\bm{\lambda}^{\star})$ be a saddle point of $L(\bm{u},\bm{v},\bm{\lambda})$  as defined in \eqref{eq:aug_Lagr}, and $\tilde{\bm{\lambda}}_i^j(k)$ be defined as in \eqref{eq:lambda_bar}. Then, it holds that
	\begin{equation}
	\begin{aligned}
	&\sum_{i\in\mathcal{N}}  \sum_{j\in \mathcal{N}_i} \langle \tilde{\bm{\lambda}}_i^j(k)-\bm{\lambda}_i^{j\star}, \hat{\bm{v}}_i^j(k)+\hat{\bm{v}}_j^i(k)\rangle\\
	&\quad + 2\sum_{i\in\mathcal{N}} \sum_{j\in \mathcal{N}_i} \langle \hat{\bm{v}}_i^j(k)-\bm{v}_i^j(k),{\bm{v}}_i^j(k)-\bm{v}_i^{j\star}\rangle  \\
	&\leq \sum_{i\in\mathcal{N}} \Big(  {-m_i}\|\bm{u}_i(k+1)-\bm{u}_i^{\star}\|_2^2  \Big. \\
	& \quad - \sum_{j\in \mathcal{N}_i} \frac{3}{2}\|\hat{\bm{v}}_i^j(k)-\bm{v}_i^j(k) \|_2^2  \\
	& \quad\Big. - \sum_{j\in \mathcal{N}_i} \frac{\eta_i^j+\eta_j^i-(\eta_i^j+\eta_j^i)^2}{2}\|\hat{\bm{v}}_i^j(k)+\hat{\bm{v}}_j^i(k)\|_2^2\Big).
	\end{aligned}
	\label{eq:ineq_opt7}
	\end{equation}
\end{lemma}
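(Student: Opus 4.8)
The plan is to derive \eqref{eq:ineq_opt7} directly from Lemma \ref{le:ineq_opt3} by inserting the definition \eqref{eq:lambda_bar} and then collecting the resulting bilinear terms link by link. First I would rearrange \eqref{eq:ineq_opt3}, moving the multiplier inner product to the left, so that $\sum_{i\in\mathcal N}\sum_{j\in\mathcal N_i}\langle\bm{\lambda}_i^j(k)-\bm{\lambda}_i^{j\star},\hat{\bm{v}}_i^j(k)+\hat{\bm{v}}_j^i(k)\rangle$ is bounded above by the remaining primal terms together with the $-m_i$ terms. Using \eqref{eq:lambda_bar} in the form $\tilde{\bm{\lambda}}_i^j(k)-\bm{\lambda}_i^{j\star}=(\bm{\lambda}_i^j(k)-\bm{\lambda}_i^{j\star})+(1-\eta_i^j)(\bm{v}_i^j(k)+\bm{v}_j^i(k))$, the left-hand side of \eqref{eq:ineq_opt7} splits into exactly this multiplier inner product plus the extra contributions $\sum_{i,j}(1-\eta_i^j)\langle\bm{v}_i^j(k)+\bm{v}_j^i(k),\hat{\bm{v}}_i^j(k)+\hat{\bm{v}}_j^i(k)\rangle$ and the term $2\sum_{i,j}\langle\hat{\bm{v}}_i^j(k)-\bm{v}_i^j(k),\bm{v}_i^j(k)-\bm{v}_i^{j\star}\rangle$ already present in \eqref{eq:ineq_opt7}. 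Bounding the multiplier inner product by the rearranged Lemma \ref{le:ineq_opt3} reduces the claim to a purely primal inequality in $\hat{\bm{v}}$, $\bm{v}$ and $\bm{v}^\star$, in which the $-m_i\|\bm{u}_i(k+1)-\bm{u}_i^\star\|_2^2$ terms appear identically on both sides and thus drop out of the remaining analysis.

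The core of the argument is then a link-wise estimate. Since $\sum_{i\in\mathcal N}\sum_{j\in\mathcal N_i}$ visits every edge $\{i,j\}\in\mathcal E$ twice, once as the ordered pair $(i,j)$ and once as $(j,i)$, I would group the summand of the reduced primal inequality with its mirror image under the swap $i\leftrightarrow j$. Abbreviating $a=\hat{\bm{v}}_i^j(k)$, $b=\hat{\bm{v}}_j^i(k)$, $p=\bm{v}_i^j(k)$, $q=\bm{v}_j^i(k)$ and $s_{ij}=\eta_i^j+\eta_j^i$, the $(i,j)$ and $(j,i)$ contributions add up so that the cross $\langle a,b\rangle$ terms cancel and the only terms involving the saddle point reduce to $2\langle(p+q)-(a+b),\,\bm{v}_i^{j\star}+\bm{v}_j^{i\star}\rangle$.

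At this point I would invoke feasibility of the saddle point: since $(\bm{u}^\star,\bm{v}^\star,\bm{\lambda}^\star)$ satisfies \eqref{eq:coup_pb}, we have $\bm{v}_i^{j\star}+\bm{v}_j^{i\star}=0$, so every saddle-point term vanishes. The paired summand then collapses to a quadratic form in $a,b,p,q$ alone, and I would verify by completing the square that the edge-wise target $-\tfrac32(\|a-p\|_2^2+\|b-q\|_2^2)-(s_{ij}-s_{ij}^2)\|a+b\|_2^2$ minus this paired summand equals $\tfrac14\|(a-p)-(b-q)\|_2^2+\tfrac14\|(p+q)+(2s_{ij}-1)(a+b)\|_2^2\ge0$. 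A convenient way to expose this square structure is to pass to the symmetric and antisymmetric combinations $a\pm b$ and $p\pm q$, which block-diagonalizes the quadratic form into the two nonnegative pieces above. Summing the resulting link-wise inequality over all edges and re-attaching the $-m_i$ terms then yields \eqref{eq:ineq_opt7}.

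The main obstacle is the bookkeeping in this last step: the asymmetric cross terms $-2\langle\bm{v}_j^i(k)-\hat{\bm{v}}_j^i(k),\hat{\bm{v}}_i^j(k)-\bm{v}_i^{j\star}\rangle$ and $2\langle\hat{\bm{v}}_i^j(k)-\bm{v}_i^j(k),\bm{v}_i^j(k)-\bm{v}_i^{j\star}\rangle$ are not symmetric under $i\leftrightarrow j$ and only combine into clean squared norms after the two orientations of each edge are added. Pinning down the coefficients $\tfrac32$ and $s_{ij}-s_{ij}^2$ exactly, and recognizing that the residual is an \emph{exact} sum of two squares (so that no restriction on $\eta_i^j$ is needed in this lemma, in contrast to the step-size condition $\eta_{ij}\in(0,\tfrac14)$ used later), is where the algebra must be carried out with care.
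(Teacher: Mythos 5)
Your proposal is correct and follows essentially the same route as the paper: start from Lemma \ref{le:ineq_opt3}, insert \eqref{eq:lambda_bar}, pair the two orientations of each edge, and use the saddle-point feasibility $\bm{v}_i^{j\star}+\bm{v}_j^{i\star}=0$ to eliminate all star-dependent terms, which is exactly the paper's chain of manipulations through \eqref{eq:ineq_opt8}--\eqref{eq:ineq_opt6}. Your exact edge-wise residual $\tfrac14\|(a-p)-(b-q)\|_2^2+\tfrac14\|(p+q)+(2s_{ij}-1)(a+b)\|_2^2$ checks out algebraically and is merely the symmetric/antisymmetric rewriting (via the parallelogram identity, with $u=(p-a)+s_{ij}(a+b)$, $v=(q-b)+s_{ij}(a+b)$) of the paper's final Young-type estimate $-\xi\langle a,b\rangle\le\tfrac12\left(\|a\|_2^2+\xi^2\|b\|_2^2\right)$, whose per-orientation slack is itself the exact square $\tfrac12\|(\bm{v}_i^j(k)-\hat{\bm{v}}_i^j(k))+s_{ij}(\hat{\bm{v}}_i^j(k)+\hat{\bm{v}}_j^i(k))\|_2^2$ -- so the two arguments coincide, including your correct observation that no restriction on $\eta_i^j$ is needed at this stage.
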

\begin{proof}
	We use Lemma \ref{le:ineq_opt3}, where we rearrange \eqref{eq:ineq_opt3} and add the term $2\sum_{i\in\mathcal{N}} \sum_{j\in \mathcal{N}_i} \langle \hat{\bm{v}}_i^j(k)-\bm{v}_i^j(k),\hat{\bm{v}}_i^j(k)-\bm{v}_i^{j\star}\rangle$ on both sides of the inequality. We obtain that
	\begin{equation}
	\begin{aligned}
	&\sum_{i\in\mathcal{N}}  \sum_{j\in \mathcal{N}_i} \langle \bm{\lambda}_i^j(k)-\bm{\lambda}_i^{j\star}, \hat{\bm{v}}_i^j(k)+\hat{\bm{v}}_j^i(k)\rangle\\
	&\quad + 2\sum_{i\in\mathcal{N}} \sum_{j\in \mathcal{N}_i} \langle \hat{\bm{v}}_i^j(k)-{\bm{v}}_i^j(k),\hat{\bm{v}}_i^j(k)-\bm{v}_i^{j\star}\rangle  \\
	&\leq \sum_{i\in\mathcal{N}} \Big(  {-m_i}\|\bm{u}_i(k+1)-\bm{u}_i^{\star}\|_2^2  \Big. \\
	&\quad   - \sum_{j\in \mathcal{N}_i} \| \hat{\bm{v}}_i^j(k) + \hat{\bm{v}}_j^i(k)\|_2^2\\
	&\quad   +2 \sum_{j\in \mathcal{N}_i} \langle \hat{\bm{v}}_j^i(k)-{\bm{v}}_j^i(k) , \hat{\bm{v}}_i^j(k)-\bm{v}_i^{j\star}\rangle\\
	&\quad \Big.  +2 \sum_{j\in \mathcal{N}_i} \langle \hat{\bm{v}}_i^j(k)-{\bm{v}}_i^j(k) , \hat{\bm{v}}_i^j(k)-\bm{v}_i^{j\star}\rangle\Big).
	\end{aligned}
	\label{eq:ineq_opt8}
	\end{equation}
	The term in the second summation on the left-hand side of the inequality can be expressed as follows:
$		\langle \hat{\bm{v}}_i^j(k)-{\bm{v}}_i^j(k),\hat{\bm{v}}_i^j(k)-\bm{v}_i^{j\star}\rangle
		= \langle \hat{\bm{v}}_i^j(k)-{\bm{v}}_i^j(k),{\bm{v}}_i^j(k)-\bm{v}_i^{j\star}\rangle + \|\hat{\bm{v}}_i^j(k)-\bm{v}_i^j(k) \|_2^2$. 
	\color{black}
	
	Moreover, for the last two terms on the right-hand side of \eqref{eq:ineq_opt8}, we have 
$	2\sum_{i\in\mathcal{N}}\sum_{j\in \mathcal{N}_i} \Big( \langle \hat{\bm{v}}_j^i(k) -  {\bm{v}}_j^i(k), \hat{\bm{v}}_i^j(k)-\bm{v}_i^{j\star}\rangle
	+\langle \hat{\bm{v}}_i^j(k)-{\bm{v}}_i^j(k),\hat{\bm{v}}_i^j(k)-\bm{v}_i^{j\star}\rangle  \Big)$,
	which is equivalent to
	$2 \sum_{\{i,j\}\in \mathcal{E}} \Big(\Big. \langle \hat{\bm{v}}_j^i(k) -  {\bm{v}}_j^i(k) + \hat{\bm{v}}_i^j(k)-{\bm{v}}_i^j(k), \hat{\bm{v}}_i^j(k)+\hat{\bm{v}}_i^{j\star}\rangle\\
	+ \langle \hat{\bm{v}}_j^i(k) -  {\bm{v}}_j^i(k) + \hat{\bm{v}}_i^j(k)-{\bm{v}}_i^j(k), \hat{\bm{v}}_j^i(k)+\hat{\bm{v}}_j^{i\star}\rangle\Big.\Big)=\\
	 \sum_{i\in\mathcal{N}}\sum_{j\in \mathcal{N}_i}\langle   \hat{\bm{v}}_i^j(k)+  \hat{\bm{v}}_j^i(k)- {\bm{v}}_i^j(k)-\bm{v}_j^i(k), \hat{\bm{v}}_i^j(k)+\hat{\bm{v}}_j^i(k)\rangle$. 
	\color{black}
	Thus, applying the two preceding relations to \eqref{eq:ineq_opt8}, we have that
	\begin{align}
	&\sum_{i\in\mathcal{N}}  \sum_{j\in \mathcal{N}_i} \langle \bm{\lambda}_i^j(k)-\bm{\lambda}_i^{j\star}, \hat{\bm{v}}_i^j(k)+\hat{\bm{v}}_j^i(k)\rangle \notag\\
	&\quad + 2\sum_{i\in\mathcal{N}} \sum_{j\in \mathcal{N}_i} \langle \hat{\bm{v}}_i^j(k)-{\bm{v}}_i^j(k),{\bm{v}}_i^j(k)-\bm{v}_i^{j\star}\rangle  \leq \notag\\
	&\sum_{i\in\mathcal{N}} \Big( \Big.  - \sum_{j\in \mathcal{N}_i} \left(\| \hat{\bm{v}}_i^j(k) + \hat{\bm{v}}_j^i(k)\|_2^2+ 2\|\hat{\bm{v}}_i^j(k)-\bm{v}_i^j(k) \|_2^2\right)   \notag\\
	&\quad + \sum_{j\in \mathcal{N}_i} \langle   \hat{\bm{v}}_i^j(k)+  \hat{\bm{v}}_j^i(k)- {\bm{v}}_i^j(k)-\bm{v}_j^i(k), \hat{\bm{v}}_i^j(k)+\hat{\bm{v}}_j^i(k)\rangle \notag\\ 
	&\quad  -m_i\|\bm{u}_i(k+1)-\bm{u}_i^{\star}\|_2^2\Big.\Big). \label{eq:ineq_opt4}
	\end{align}
	Furthermore, adding the term $\sum_{i\in\mathcal{N}} \sum_{j\in \mathcal{N}_i} (1-\eta_i^j)\langle\bm{v}_i^j(k)+\bm{v}_j^i(k),\hat{\bm{v}}_i^j(k)+\hat{\bm{v}}_j^i(k)  \rangle$ to both sides of the inequality in \eqref{eq:ineq_opt4} and recalling the definition of $\tilde{\bm{\lambda}}_i^j(k)$ in \eqref{eq:lambda_bar}, it follows that
	\begin{align}
	&\sum_{i\in\mathcal{N}}  \sum_{j\in \mathcal{N}_i} \langle \tilde{\bm{\lambda}}_i^j(k)-\bm{\lambda}_i^{j\star}, \hat{\bm{v}}_i^j(k)+\hat{\bm{v}}_j^i(k)\rangle \notag\\
	&\quad + 2\sum_{i\in\mathcal{N}} \sum_{j\in \mathcal{N}_i} \langle \hat{\bm{v}}_i^j(k)-\bm{v}_i^j(k),{\bm{v}}_i^j(k)-\bm{v}_i^{j\star}\rangle  \notag\\
	&\leq \sum_{i\in\mathcal{N}} \Big(  {-m_i}\|\bm{u}_i(k+1)-\bm{u}_i^{\star}\|_2^2  \Big. \notag\\
	&\quad   - \sum_{j\in \mathcal{N}_i} \left(\| \hat{\bm{v}}_i^j(k) + \hat{\bm{v}}_j^i(k)\|_2^2+ 2\|\hat{\bm{v}}_i^j(k)-\bm{v}_i^j(k) \|_2^2\right) \notag\\ 
	&\quad   + \sum_{j\in \mathcal{N}_i} \langle \hat{\bm{v}}_i^j(k)+  \hat{\bm{v}}_j^i(k)- {\bm{v}}_i^j(k)-\bm{v}_j^i(k), \hat{\bm{v}}_i^j(k)+\hat{\bm{v}}_j^i(k)\rangle \notag\\
	&\quad  \Big. + \sum_{j\in \mathcal{N}_i} (1-\eta_i^j)\langle\bm{v}_i^j(k)+\bm{v}_j^i(k),\hat{\bm{v}}_i^j(k)+\hat{\bm{v}}_j^i(k)  \rangle\Big).\label{eq:ineq_opt5}
	\end{align}
	Now, consider the last two terms on the right-hand side of \eqref{eq:ineq_opt5}. By adding them with  $\sum_{i\in\mathcal{N}}\sum_{j\in \mathcal{N}_i}(1-\eta_i^j)\left(\|\hat{\bm{v}}_i^j(k)+\hat{\bm{v}}_j^i(k)\|_2^2 - \|\hat{\bm{v}}_i^j(k)+\hat{\bm{v}}_j^i(k)\|_2^2\right)=0 $, we obtain 
	\begin{equation*}
		\begin{aligned}
		&\sum_{i\in\mathcal{N}}\sum_{j\in \mathcal{N}_i} \Big( (1-\eta_i^j)\langle\bm{v}_i^j(k)+\bm{v}_j^i(k),\hat{\bm{v}}_i^j(k)+\hat{\bm{v}}_j^i(k)\rangle \Big.\\
		&\quad + \langle \hat{\bm{v}}_i^j(k)+  \hat{\bm{v}}_j^i(k)- {\bm{v}}_i^j(k)-\bm{v}_j^i(k), \hat{\bm{v}}_i^j(k)+\hat{\bm{v}}_j^i(k)\rangle\Big.\Big) \\
		& = \sum_{i\in\mathcal{N}}\sum_{j\in \mathcal{N}_i} \Big( (1-\eta_i^j) \|\hat{\bm{v}}_i^j(k)+\hat{\bm{v}}_j^i(k)\|_2^2+\Big.\\
		& \quad \quad \Big.\eta_i^j \langle \hat{\bm{v}}_i^j(k)+  \hat{\bm{v}}_j^i(k)- {\bm{v}}_i^j(k)-\bm{v}_j^i(k), \hat{\bm{v}}_i^j(k)+\hat{\bm{v}}_j^i(k)\rangle\Big).
		\end{aligned}
	\end{equation*}
	\color{black}
	Therefore, \eqref{eq:ineq_opt5} becomes
	\begin{equation}
	\begin{aligned}
	&\sum_{i\in\mathcal{N}}  \sum_{j\in \mathcal{N}_i} \langle \tilde{\bm{\lambda}}_i^j(k)-\bm{\lambda}_i^{j\star}, \hat{\bm{v}}_i^j(k)+\hat{\bm{v}}_j^i(k)\rangle\\
	&\quad + 2\sum_{i\in\mathcal{N}} \sum_{j\in \mathcal{N}_i} \langle \hat{\bm{v}}_i^j(k)-\bm{v}_i^j(k),{\bm{v}}_i^j(k)-\bm{v}_i^{j\star}\rangle \leq \\
	&\sum_{i\in\mathcal{N}} \Big(  {-m_i}\|\bm{u}_i(k+1)-\bm{u}_i^{\star}\|_2^2  \Big. \\
	& + \sum_{j\in \mathcal{N}_i} \eta_i^j\langle \hat{\bm{v}}_i^j(k)+  \hat{\bm{v}}_j^i(k)- {\bm{v}}_i^j(k)-\bm{v}_j^i(k), \hat{\bm{v}}_i^j(k)+\hat{\bm{v}}_j^i(k)\rangle\\
	&   \Big.- \sum_{j\in \mathcal{N}_i} \left(\eta_i^j\| \hat{\bm{v}}_i^j(k) + \hat{\bm{v}}_j^i(k)\|_2^2+ 2\|\bm{v}_i^j(k)-\hat{\bm{v}}_i^j(k) \|_2^2\right)\Big).
	\end{aligned}
	\label{eq:ineq_opt6}
	\end{equation}
	Now, we compute an upper-bound for the term $$\sum_{i\in\mathcal{N}} \sum_{j\in \mathcal{N}_i}\eta_i^j\langle \hat{\bm{v}}_i^j(k)+  \hat{\bm{v}}_j^i(k)- {\bm{v}}_i^j(k)-\bm{v}_j^i(k), \hat{\bm{v}}_i^j(k)+\hat{\bm{v}}_j^i(k)\rangle,$$ on the right-hand side of the inequality in \eqref{eq:ineq_opt6}. To that end, this term can be written as
	\begin{align*}
		\sum_{i\in\mathcal{N}} \sum_{j\in \mathcal{N}_i}-(\eta_i^j+\eta_j^i)\langle {\bm{v}}_i^j(k)  - \hat{\bm{v}}_i^j(k), \hat{\bm{v}}_i^j(k)+\hat{\bm{v}}_j^i(k)\rangle.
	\end{align*}
	Using the fact that, for any $\xi \in \mathbb{R}, a\in\mathbb{R}^n, b\in\mathbb{R}^n$, $\|a+\xi b\|_2^2=\|a\|_2^2 + \xi^2\|b\|_2^2+2\xi\langle a,b \rangle \Rightarrow -\xi \langle a,b \rangle \leq \frac{1}{2} \left(\|a\|_2^2 + \xi^2\|b\|_2^2\right) $, we obtain an upper-bound of the term inside the summation, i.e.,
	 $-(\eta_i^j+\eta_j^i)\langle {\bm{v}}_i^j(k)  - \hat{\bm{v}}_i^j(k), \hat{\bm{v}}_i^j(k)+\hat{\bm{v}}_j^i(k)\rangle 
	  \leq  \frac{1}{2}\left(\|{\bm{v}}_i^j(k)  - \hat{\bm{v}}_i^j(k)\|_2^2 + (\eta_i^j+\eta_j^i)^2\|\hat{\bm{v}}_i^j(k)+\hat{\bm{v}}_j^i(k)\|_2^2 \right)$. \color{black}
	 Therefore, using the above upper-bound and the fact that 
	 $\sum_{i\in\mathcal{N}}\sum_{j\in \mathcal{N}_i}\eta_i^j\|\hat{\bm{v}}_i^j(k)+\hat{\bm{v}}_j^i(k)\|_2^2 = \sum_{i\in\mathcal{N}}\sum_{j\in \mathcal{N}_i}\frac{\eta_i^j+\eta_j^i}{2}\|\hat{\bm{v}}_i^j(k)+\hat{\bm{v}}_j^i(k)\|_2^2,
	 $ 
	 the desired inequality \eqref{eq:ineq_opt7} follows.
\end{proof}

As the next building block to show the convergence result, we define a Lyapunov function, denoted by $V(k)$. For any given saddle point of $L(\bm{u},\bm{v},\bm{\lambda})$ (see \eqref{eq:aug_Lagr}), denoted by $(\bm{u}^{\star},\bm{v}^{\star},\bm{\lambda}^{\star})$, we can construct $V(k)$ as follows:
\begin{equation}
\small
V(k) = \|\bm{v}(k)-\bm{v}^{\star}\|_{H}^2+\frac{1}{2}\|\tilde{\bm{\lambda}}(k)-\bm{\lambda}^{\star}\|_H^2,
\label{eq:Vk}
\end{equation}
where $\tilde{\bm{\lambda}}=[\tilde{\bm{\lambda}}_i(k)]_{i\in\mathcal{N}}$, $\tilde{\bm{\lambda}}_i(k)=[\tilde{\bm{\lambda}}_i^j(k)]_{j\in\mathcal{N}_i}$, $\tilde{\bm{\lambda}}_i^j(k)$ is defined in \eqref{eq:lambda_bar}, 
$H = \operatorname{blkdiag}(\{H_i\}_{i\in\mathcal{N}})$ and
$H_i = \operatorname{blkdiag}(\{(\eta_i^j)^{-1}I_{n_s}\}_{j\in\mathcal{N}_i})$,  for all $i\in\mathcal{N}$. Now, we show that $\{V(k)\}$ is non-increasing under Algorithm \ref{alg:std} and obtain an estimate that will be used in the main theorems. 

\begin{lemma}
	\label{le:Vk_ub}
	Let Assumptions \ref{as:strict_cvx_cost}-\ref{as:nonempty_set} hold. Furthermore, let the sequence $\{\bm{u}(k),\bm{v}(k),\hat{\bm{v}}(k),\bm{\lambda}(k)\}$ be generated by Algorithm \ref{alg:std}, $(\bm{u}^{\star},\bm{v}^{\star},\bm{\lambda}^{\star})$ be a saddle point of $L(\bm{u},\bm{v},\bm{\lambda})$  as defined in \eqref{eq:aug_Lagr}, and $V(k)$ be defined in \eqref{eq:Vk}. 	
	If $\eta_i^j=\eta_j^i=\eta_{ij} \in \left(0,\frac{1}{4}\right)$, then $\{V(k)\}$ is a monotonically non-increasing sequence and the following inequality holds:
	\begin{equation}
	\begin{aligned}
		&V(k+1)-V(k) \\
		& \leq- \sum_{i\in\mathcal{N}} m_i\|\bm{u}_i(k+1)-\bm{u}_i^{\star}\|_2^2   \\
		& \quad - \sum_{i\in\mathcal{N}}  \sum_{j\in \mathcal{N}_i} \left(\frac{3}{2}-\eta_{ij}\right)\|\hat{\bm{v}}_i^j(k)-\bm{v}_i^j(k) \|_2^2  \\
		& \quad - \sum_{i\in\mathcal{N}} \sum_{j\in \mathcal{N}_i} \frac{\eta_{ij}-(2\eta_{ij})^2}{2}\|\hat{\bm{v}}_i^j(k)+\hat{\bm{v}}_j^i(k)\|_2^2.
	\end{aligned}
	\label{eq:Vk_ub}
	\end{equation}
\end{lemma}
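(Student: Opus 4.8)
The plan is to expand the one-step difference $V(k+1)-V(k)$ directly using the update rules of Algorithm~\ref{alg:std}, and then to recognize the resulting cross terms as precisely the left-hand side of the estimate \eqref{eq:ineq_opt7} furnished by Lemma~\ref{le:ineq_opt}. Since $V(k)$ is a sum of $H$-weighted squared norms and $H$ is block-diagonal with blocks $(\eta_i^j)^{-1}I_{n_s}$, I would treat each link-direction $(i,j)$ separately and apply the elementary identity $\|a+d\|_2^2-\|a\|_2^2 = 2\langle a,d\rangle + \|d\|_2^2$, where $a$ is the current deviation from the saddle point and $d$ the increment. The $(\eta_i^j)^{-1}$ weight is engineered to cancel against the step sizes appearing in the increments, which is what makes the cross terms match \eqref{eq:ineq_opt7} cleanly.

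First I would compute the two increments. From \eqref{eq:v_hat_up_agl}, $\bm{v}_i^j(k+1)-\bm{v}_i^j(k)=\eta_i^j(\hat{\bm{v}}_i^j(k)-\bm{v}_i^j(k))$, so the primal part contributes
\begin{equation*}
\sum_{i\in\mathcal N}\sum_{j\in\mathcal N_i}\Big(2\langle \hat{\bm{v}}_i^j(k)-\bm{v}_i^j(k),\bm{v}_i^j(k)-\bm{v}_i^{j\star}\rangle + \eta_i^j\|\hat{\bm{v}}_i^j(k)-\bm{v}_i^j(k)\|_2^2\Big),
\end{equation*}
after the weight $(\eta_i^j)^{-1}$ absorbs one factor of $\eta_i^j$ in the cross term and one of $(\eta_i^j)^2$ in the square term. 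The crucial step is the dual increment: substituting the dual update \eqref{eq:dual_l_up_agl} into the definition \eqref{eq:lambda_bar} gives the telescoping identity $\tilde{\bm{\lambda}}_i^j(k+1)=\bm{\lambda}_i^j(k)+\bm{v}_i^j(k+1)+\bm{v}_j^i(k+1)$, whence, using $\eta_i^j=\eta_j^i=\eta_{ij}$ together with \eqref{eq:v_hat_up_agl}, one obtains $\tilde{\bm{\lambda}}_i^j(k+1)-\tilde{\bm{\lambda}}_i^j(k)=\eta_{ij}(\hat{\bm{v}}_i^j(k)+\hat{\bm{v}}_j^i(k))$. The $H$-weighted dual part then contributes $\sum_{i,j}\big(\langle \tilde{\bm{\lambda}}_i^j(k)-\bm{\lambda}_i^{j\star},\hat{\bm{v}}_i^j(k)+\hat{\bm{v}}_j^i(k)\rangle+\tfrac{\eta_{ij}}{2}\|\hat{\bm{v}}_i^j(k)+\hat{\bm{v}}_j^i(k)\|_2^2\big)$.

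Adding the two contributions, the cross terms assemble exactly into the left-hand side of \eqref{eq:ineq_opt7}, while the leftover square terms are $\sum_{i,j}\eta_{ij}\|\hat{\bm{v}}_i^j(k)-\bm{v}_i^j(k)\|_2^2$ and $\sum_{i,j}\tfrac{\eta_{ij}}{2}\|\hat{\bm{v}}_i^j(k)+\hat{\bm{v}}_j^i(k)\|_2^2$. I would then invoke Lemma~\ref{le:ineq_opt} to replace the cross terms by the right-hand side of \eqref{eq:ineq_opt7} (using $\eta_i^j+\eta_j^i=2\eta_{ij}$), and collect coefficients: the $\|\hat{\bm{v}}_i^j(k)-\bm{v}_i^j(k)\|_2^2$ coefficient becomes $-(\tfrac32-\eta_{ij})$ and the $\|\hat{\bm{v}}_i^j(k)+\hat{\bm{v}}_j^i(k)\|_2^2$ coefficient becomes $-\tfrac{\eta_{ij}-(2\eta_{ij})^2}{2}$, yielding \eqref{eq:Vk_ub}. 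Monotonic non-increase then follows by checking that each coefficient on the right is nonpositive: $m_i>0$ by Assumption~\ref{as:strict_cvx_cost}, $\tfrac32-\eta_{ij}>0$, and $\eta_{ij}-(2\eta_{ij})^2=\eta_{ij}(1-4\eta_{ij})>0$ precisely because $\eta_{ij}\in(0,\tfrac14)$.

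The main obstacle is the dual increment identity: it is not obvious a priori that the auxiliary variable $\tilde{\bm{\lambda}}$ defined in \eqref{eq:lambda_bar} telescopes so that its increment is a clean multiple of $\hat{\bm{v}}_i^j(k)+\hat{\bm{v}}_j^i(k)$. This is exactly the purpose for which $\tilde{\bm{\lambda}}$ was introduced, and it relies essentially on the symmetry $\eta_i^j=\eta_j^i$; without it the $\bm{v}_i^j(k)$ and $\bm{v}_j^i(k)$ terms would not combine and the increment would fail to align with \eqref{eq:ineq_opt7}. Once this identity and the weight cancellation are in place, the remainder is routine coefficient bookkeeping.
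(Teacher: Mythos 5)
Your proposal is correct and follows essentially the same route as the paper's proof: both establish the increment identity $\tilde{\bm{\lambda}}_i^j(k+1)=\tilde{\bm{\lambda}}_i^j(k)+\eta_{ij}(\hat{\bm{v}}_i^j(k)+\hat{\bm{v}}_j^i(k))$ (which, as you note, needs $\eta_i^j=\eta_j^i$), expand the $H$-weighted squared norms so the $(\eta_i^j)^{-1}$ weights cancel the step sizes, identify the cross terms with the left-hand side of \eqref{eq:ineq_opt7}, and invoke Lemma \ref{le:ineq_opt} before collecting coefficients. Your coefficient bookkeeping, including $\tfrac{\eta_{ij}}{2}-\tfrac{2\eta_{ij}-(2\eta_{ij})^2}{2}=-\tfrac{\eta_{ij}-(2\eta_{ij})^2}{2}$ and the sign check $\eta_{ij}(1-4\eta_{ij})>0$ for $\eta_{ij}\in(0,\tfrac14)$, matches the paper exactly.
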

\begin{proof}
		Firstly, notice that $\tilde{\bm{\lambda}}_i^j(k+1)$ can be expressed as 
	$\tilde{\bm{\lambda}}_i^j(k+1) 
	=\tilde{\bm{\lambda}}_i^j(k) + \eta_i^j(\hat{\bm{v}}_i^j(k)+\hat{\bm{v}}_j^i(k)).$ 
	\color{black} Thus, 
	we have that
	\begin{align}
	&\|\tilde{\bm{\lambda}}_i^j(k+1)-\bm{\lambda}_i^{j\star}\|_2^2 \notag\\
	&=\|\tilde{\bm{\lambda}}_i^j(k)-\bm{\lambda}_i^{j\star}\|_2^2 + 2\eta_i^j\langle \tilde{\bm{\lambda}}_i^j(k)-\bm{\lambda}_i^{j\star}, \hat{\bm{v}}_i^j(k)+\hat{\bm{v}}_j^i(k) \rangle \notag\\
	&\quad + \|\eta_i^j(\hat{\bm{v}}_i^j(k)+\hat{\bm{v}}_j^i(k))\|_2^2. \label{eq:Vk_p2}
	\end{align}
	Moreover, we also have that
	\begin{equation}
	\begin{aligned}
	\|\bm{v}_i^j(k+1)-\bm{v}_i^{j\star}\|_2^2
	&=\|\bm{v}_i^j(k)-\bm{v}_i^{j\star}\|_2^2 + \|\eta_i^j(\hat{\bm{v}}_i^j -\bm{v}_i^j(k) )\|_2^2\\
	&\quad + 2\eta_i^j \langle \hat{\bm{v}}_i^j -\bm{v}_i^j(k), \bm{v}_i^j(k)-\bm{v}_i^{j\star} \rangle. 
	\end{aligned}
	\label{eq:Vk_p1}
	\end{equation}
	By using the expression of $V(k+1)$ from \eqref{eq:Vk_p2}-\eqref{eq:Vk_p1} and using the inequality in \eqref{eq:ineq_opt7}, we obtain that:
	\begin{equation*}
	\begin{aligned}
	&V(k+1)-V(k)  \\
	&\leq \|\hat{\bm{v}}(k) -\bm{v}(k)\|_{H^{-1}}^2+\sum_{i\in\mathcal{N}}\sum_{j\in \mathcal{N}_i}\frac{\eta_i^j}{2}\|(\hat{\bm{v}}_i^j(k)+\hat{\bm{v}}_j^i(k))\|_2^2\\
	& \quad - \frac{3}{2}\|\hat{\bm{v}}(k)-\bm{v}(k) \|_2^2 - \sum_{i\in\mathcal{N}} m_i\|\bm{u}_i(k+1)-\bm{u}_i^{\star}\|_2^2   \\
	& \quad - \sum_{i\in\mathcal{N}} \sum_{j\in \mathcal{N}_i} \frac{2\eta_{i}^j-(2\eta_{i}^j)^2}{2}\|\hat{\bm{v}}_i^j(k)+\hat{\bm{v}}_j^i(k)\|_2^2.\\
	\end{aligned}
	\end{equation*}
	Thus, the inequality \eqref{eq:Vk_ub} follows and $V(k)$ is monotonically non-increasing 
	if $\eta_i^j = \eta_j^i=\eta_{ij} \in (0,\frac{1}{4})$.
\end{proof}

\medskip
The function $V(k)$ is used to construct a Lyapunov function for Algorithm \ref{alg_tv}. Moreover, the estimate obtained in Lemma \ref{le:Vk_ub} will also be used to obtain the result in Lemma \ref{le:V_tilde}. Therefore, now consider the function $\tilde{V}(k)$, defined as follows. For any saddle point  of $L(\bm{u},\bm{v},\bm{\lambda})$ (see \eqref{eq:aug_Lagr}), denoted by $(\bm{u}^{\star},\bm{v}^{\star},\bm{\lambda}^{\star})$, we have that
\begin{equation}
\begin{aligned}
\tilde{V}(k) &= \|\bm{v}(k)-\bm{v}^{\star}\|_{\tilde{H}}^2+\frac{1}{2}\|{\bm{\nu}}(k)-\bm{\lambda}^{\star}\|_{\tilde{H}}^2,
\end{aligned}
\label{eq:Vk_tilde}
\end{equation}
where $\bm{\nu}(k)=[\bm{\nu}_i(k)]_{i\in\mathcal{N}}$, $\bm{\nu}_i(k) = [\bm{\nu}_i^j(k)]_{j\in\mathcal{N}_i}$, 
$${\bm{\nu}}_i^j(k) = \bm{\lambda}_i^j(k) + (1-\eta_i^j)(\bm{v}_i^j(k)+\bm{z}_i^j(k)),$$
for all $j\in\mathcal{N}_i$ and $i\in\mathcal{N}$, 
$\tilde{H} = \operatorname{blkdiag}(\{\tilde{H}_i\}_{i\in\mathcal{N}})$, and 
$\tilde{H}_i = \operatorname{blkdiag}(\{(\alpha_{ij}\eta_i^j)^{-1}I_{n_s}\}_{j\in\mathcal{N}_i})$, for all $i\in\mathcal{N}$, where $\alpha_{ij}=\beta_{ij}\gamma_{i}\gamma_{j} \in (0,1]$. Recall that $\beta_{ij}$ and $\gamma_{i}$ are the probability of link $\{i,j\}$ being active and agent $i$ being active, respectively. 
\begin{lemma}
	\label{le:V_tilde}
	Let Assumptions \ref{as:strict_cvx_cost}-\ref{as:act_agent} hold. Furthermore, let the sequence $\{\bm{u}(k),\bm{v}(k),\hat{\bm{v}}(k),\bm{\lambda}(k)\}$ be generated by Algorithm \ref{alg_tv}, $(\bm{u}^{\star},\bm{v}^{\star},\bm{\lambda}^{\star})$ be a saddle point of $L(\bm{u},\bm{v},\bm{\lambda})$  as defined in \eqref{eq:aug_Lagr}, and	$\tilde{V}(k)$ be defined as in \eqref{eq:Vk_tilde}. 	 
	If $\eta_i^j=\eta_j^i=\eta_{ij} \in \left(0,\frac{1}{4}\right)$, for all $j\in\mathcal N_i$ and $i\in\mathcal N$, then the sequence $\{\tilde{V}(k)\}$ is a non-negative supermartingale and it holds with probability 1 that
	\begin{equation}
	\begin{aligned}
	&\mathbb{E}\left(\tilde{V}(k+1)|\mathcal{F}(k)\right)-\tilde{V}(k) \\
	& \leq- \sum_{i\in\mathcal{N}} m_i\|\bm{u}_i(k+1)-\bm{u}_i^{\star}\|_2^2   \\
	& \quad - \sum_{i\in\mathcal{N}}  \sum_{j\in \mathcal{N}_i} \left(\frac{3}{2}-\eta_{ij}\right)\|\hat{\bm{v}}_i^j(k)-\bm{v}_i^j(k) \|_2^2  \\
	& \quad - \sum_{i\in\mathcal{N}} \sum_{j\in \mathcal{N}_i} \frac{\eta_{ij}-(2\eta_{ij})^2}{2}\|\hat{\bm{v}}_i^j(k)+\hat{\bm{v}}_j^i(k)\|_2^2 \leq 0.
	\end{aligned}
	\label{eq:ineq_supermart}
	\end{equation} 
\end{lemma}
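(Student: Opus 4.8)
The plan is to reduce the stochastic, imperfect-communication update to the deterministic, perfect-communication update of Algorithm~\ref{alg:std} by taking conditional expectation, so that Lemma~\ref{le:Vk_ub} can be invoked directly. The first and most structural step is to show that the tracking variables are \emph{exact}: $\bm{z}_i^j(k)=\bm{v}_j^i(k)$ and $\bm{\xi}_i^j(k)=\bm{\lambda}_j^i(k)$ for all $k\ge0$. I would prove this by induction on $k$, the base case being supplied by the initialization in Algorithm~\ref{alg_tv}. The inductive step rests on the symmetry of the effective-activation event: since $j\in\mathcal{A}_i(k+1)$ requires $i,j\in\mathcal{A}(k+1)$ and $\{i,j\}\in\mathcal{E}^c(k+1)$, this event is symmetric in $i$ and $j$, so $j\in\mathcal{A}_i(k+1)$ if and only if $i\in\mathcal{A}_j(k+1)$. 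Hence whenever $\bm{v}_j^i$ is refreshed, $i$ simultaneously sets $\bm{z}_i^j=\bm{v}_j^i(k+1)$, and when it is not refreshed both are frozen; the same argument governs $\bm{\xi}_i^j$ and $\bm{\lambda}_j^i$. Two consequences follow: the local optimization \eqref{eq:primal_uv_up_agl_tv} solved by each active agent coincides with the perfect-communication problem \eqref{eq:primal_uv_up_agl}, so $\hat{\bm{v}}_i(k)$ is well defined and $\mathcal{F}(k)$-measurable for every agent (it depends only on $\bm{\lambda}_i^j(k),\bm{\lambda}_j^i(k),\bm{v}_j^i(k)$); and $\bm{\nu}_i^j(k)=\tilde{\bm{\lambda}}_i^j(k)$ of \eqref{eq:lambda_bar}, so $\tilde V(k)$ and the $V(k)$ of \eqref{eq:Vk} share the same integrand, differing only through the edge weights $(\alpha_{ij}\eta_i^j)^{-1}$ versus $(\eta_i^j)^{-1}$.

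Next I would compute $\mathbb{E}[\tilde V(k+1)\mid\mathcal{F}(k)]$ edge by edge. For a fixed link $\{i,j\}$, the component $\bm{v}_i^j$ (respectively $\bm{\nu}_i^j$) jumps to its perfect-communication value --- the Algorithm~\ref{alg:std} update of $\bm{v}_i^j(k+1)$, respectively $\tilde{\bm{\lambda}}_i^j(k+1)=\tilde{\bm{\lambda}}_i^j(k)+\eta_i^j(\hat{\bm{v}}_i^j(k)+\hat{\bm{v}}_j^i(k))$ --- when the edge is effectively active, and is otherwise frozen at its value at $k$. Under Assumptions~\ref{as:prob_link} and~\ref{as:act_agent} the marginal probability of effective activation is exactly $\alpha_{ij}=\beta_{ij}\gamma_i\gamma_j$, and since each summand of $\tilde V$ depends only on the activation of its own edge, linearity of expectation lets me replace each term by its two-point conditional mean without needing any independence \emph{across} edges. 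Because the weight $(\alpha_{ij}\eta_i^j)^{-1}$ is tuned precisely so that $\alpha_{ij}^{-1}\big[\alpha_{ij}(\text{full})+(1-\alpha_{ij})(\text{old})\big]-\alpha_{ij}^{-1}(\text{old})=(\text{full})-(\text{old})$, the factor $\alpha_{ij}$ cancels and the conditional expectation of the per-edge increment collapses to the corresponding \emph{deterministic} perfect-communication increment weighted by $(\eta_i^j)^{-1}$.

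The payoff is the identity $\mathbb{E}[\tilde V(k+1)\mid\mathcal{F}(k)]-\tilde V(k)=V(k+1)-V(k)$, where the right-hand side is the one-step change produced by the full (perfect-communication) update applied to the state at time $k$, with the weights of \eqref{eq:Vk}. Applying Lemma~\ref{le:Vk_ub} then yields precisely the three negative terms on the right of \eqref{eq:ineq_supermart}, and the choice $\eta_{ij}\in(0,\tfrac14)$ makes $\tfrac32-\eta_{ij}>0$ and $\eta_{ij}-(2\eta_{ij})^2=\eta_{ij}(1-4\eta_{ij})>0$, so the bound is nonpositive. Non-negativity of $\tilde V(k)$ is immediate since $\tilde H\succ0$ (all weights $(\alpha_{ij}\eta_i^j)^{-1}$ are positive as $\alpha_{ij}\in(0,1]$), whence $\{\tilde V(k)\}$ is a non-negative supermartingale.

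I expect the main obstacle to be the bookkeeping in the second step rather than any deep inequality: one must verify that the ``frozen versus full'' dichotomy applies simultaneously and consistently to $\bm{v}_i^j$, $\bm{z}_i^j$, $\bm{\lambda}_i^j$, and hence $\bm{\nu}_i^j$ on each edge, and that the companion copies $\bm{v}_j^i,\bm{\nu}_j^i$ (carrying the identical weight because $\alpha_{ji}=\alpha_{ij}$ and $\eta_j^i=\eta_i^j$) are governed by the \emph{same} activation event. The exactness of the tracking established in the first step is what makes this reduction valid; without $\bm{z}_i^j(k)=\bm{v}_j^i(k)$ and $\bm{\xi}_i^j(k)=\bm{\lambda}_j^i(k)$ the local step would not reproduce \eqref{eq:primal_uv_up_agl} and the clean cancellation would fail.
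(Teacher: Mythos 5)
Your proposal is correct and takes essentially the same route as the paper's proof: exact tracking ($\bm{z}_i^j(k)=\bm{v}_j^i(k)$, $\bm{\xi}_i^j(k)=\bm{\lambda}_j^i(k)$) reduces each edge to a two-point conditional expectation in which the weight $(\alpha_{ij}\eta_i^j)^{-1}$ cancels the activation probability $\alpha_{ij}$, yielding the identity $\mathbb{E}\left(\tilde{V}(k+1)|\mathcal{F}(k)\right)-\tilde{V}(k)=V(k+1)-V(k)$, after which Lemma~\ref{le:Vk_ub} delivers \eqref{eq:ineq_supermart}. Your explicit induction via the symmetry of the effective-activation event, and the observation that $\hat{\bm{v}}_i(k)$ is $\mathcal{F}(k)$-measurable for every agent, merely spell out details the paper leaves implicit.
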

\begin{proof}
	Since $\tilde{V}(k)$ is a sum of norms and $\eta_{i}^j$ and $\alpha_{ij}$ are positive, the sequence $\{\tilde{V}(k)\}$ is clearly non-negative. Denote by $\mathcal{F}({k})$  the filtration up to and including the iteration $k$, i.e., $\mathcal{F}({k})=\{\mathcal{A}(\ell),\mathcal{E}^c(\ell),\bm{u}(\ell),\bm{v}(\ell),\bm{\lambda}(\ell),\bm{z}(\ell),\bm{\xi}(\ell),\ \ell=0,1,\dots,k\}$. 
	Now, we show that the conditional expectation of the sequence with respect to $\mathcal{F}({k})$ is always non-increasing. Based on Assumptions \ref{as:prob_link} and \ref{as:act_agent}, \textcolor{black}{a proper initialization in Algorithm \ref{alg_tv}}, and the update rules \eqref{eq:v_hat_up_agl_tv}, \eqref{eq:z_up_agl_tv}, and \eqref{eq:dual_l_up_agl_tv}, the variables  $\bm{v}_i^j(k+1)$,  $\bm{z}_i^j(k+1)=\bm{v}_j^i(k+1)$, and $\bm{\lambda}_i^j(k+1)$, for each $j\in\mathcal{N}_i$, are only updated when agents $i$ and $j$ are active and link $\{i,j\}$ is active. Therefore, we can denote the probability of $\bm{v}_i^j(k+1)$, $\bm{z}_i^j(k+1)=\bm{v}_j^i(k+1)$, and $\bm{\lambda}_i^j(k+1)$ being updated by $\alpha_{ij}=\beta_{ij}\gamma_{i}\gamma_{j} \in (0,1]$, whereas, with probability $1-\alpha_{ij}$, they are not updated and the values remain the same as $\bm{v}_i^j(k)$, $\bm{z}_i^j(k)=\bm{v}_j^i(k)$, and $\bm{\lambda}_i^j(k)$. Thus, we also observe that ${\bm{\nu}}_i^j(k+1) = {\tilde{\bm{\lambda}}}_i^j(k+1) = {\bm{\lambda}}_i^j(k+1) + (1-\eta_{i}^j)({\bm{v}}_i^j(k+1)+{\bm{v}}_j^i(k+1))$ with probability $\alpha_{ij}$ or  the value ${\bm{\nu}}_i^j(k)={\tilde{\bm{\lambda}}}_i^j(k)$ is kept with with probability $1-\alpha_{ij}$. Hence, we obtain, with probability 1, that 
	\begin{equation*}
	\begin{aligned}
	&\mathbb{E}\left(\tilde{V}(k+1)|\mathcal{F}(k)\right)-\tilde{V}(k)= -\tilde{V}(k)   \\
	&\quad +\mathbb{E}\left(\|\bm{v}(k+1)-\bm{v}^{\star}\|_{\tilde{H}}^2+\frac{1}{2}\|{\bm{\nu}}(k+1)-\bm{\lambda}^{\star}\|_{\tilde{H}}^2\Bigg|\mathcal{F}(k)\right) \\
	&= \|{\bm{v}}(k+1)-\bm{v}^{\star}\|_H^2 -   \|{\bm{v}}(k)-\bm{v}^{\star}\|_H^2\\
	&\quad +\frac{1}{2}\|{\tilde{\bm{\lambda}}}(k+1)-\bm{\lambda}^{\star}\|_H^2-\frac{1}{2}\|{\tilde{\bm{\lambda}}}(k)-\bm{\lambda}^{\star}\|_H^2.\\
	\end{aligned}%
	\end{equation*}
	\color{black}
Notice that since the scalings of the remaining quadratic terms do not involve $\alpha_{ij}$, we can use the weighted vector norm induced by $H$. Based on the definition of $V(k)$ given in \eqref{eq:Vk}, we obtain with probability 1 that
	\begin{equation*}
	\begin{aligned}
	&\mathbb{E}\left(\tilde{V}(k+1)|\mathcal{F}(k)\right)-\tilde{V}(k)=V(k+1)-V(k).
		\end{aligned}
	\end{equation*}
	  Therefore, by applying \eqref{eq:Vk_ub} to this relation, the desired relations in \eqref{eq:ineq_supermart} follow, with probability 1,  
	when $\eta_i^j = \eta_j^i=\eta_{ij} \in \left(0,\frac{1}{4}\right)$. Thus, \eqref{eq:ineq_supermart} also shows that the sequence $\{\tilde{V}(k)\}$ is non-negative supermartingale. 
\end{proof}

\subsection{Proof of Theorem \ref{th:conv}}
\label{sec:pf_th_conv}
	Now, we are ready to prove Theorem \ref{th:conv}. Recall the function $\tilde{V}(k)$ defined in \eqref{eq:Vk_tilde} and the inequality \eqref{eq:ineq_supermart} in Lemma \ref{le:V_tilde}. Rearranging and iterating \eqref{eq:ineq_supermart}, for $\ell=0,\dots,{k}$, and taking the total expectation, we have that 
	\begin{equation*}
	\begin{aligned}
	& \sum_{\ell = 0}^{k}\sum_{i\in\mathcal{N}} \mathbb{E} \left(m_i\|\bm{u}_i(\ell+1)-\bm{u}_i^{\star}\|_2^2\right)   \\
	& + \sum_{\ell = 0}^{k}\sum_{i\in\mathcal{N}}  \sum_{j\in \mathcal{N}_i} \left(\frac{3}{2}-\eta_{ij}\right)\mathbb{E}\left(\|\hat{\bm{v}}_i^j(\ell)-\bm{v}_i^j(\ell) \|_2^2\right)  \\
	& + \sum_{\ell = 0}^{k}\sum_{i\in\mathcal{N}} \sum_{j\in \mathcal{N}_i} \frac{\eta_{ij}-(2\eta_{ij})^2}{2}\mathbb{E}\left(\|\hat{\bm{v}}_i^j(\ell)+\hat{\bm{v}}_j^i(\ell)\|_2^2\right) \\
	& \leq \sum_{\ell = 0}^{k}\mathbb{E}\left( \tilde{V}(\ell)-\tilde{V}(\ell+1)\right) \\
	&= \tilde{V}(0)-\mathbb{E}\left(\tilde{V}(k+1)\right)\leq \tilde{V}(0),
	\end{aligned}
	\end{equation*}
	where the last inequality is obtained by dropping the non-positive term $-\mathbb{E}\left(\tilde{V}({k}+1)\right)$. The above inequalities imply that $\{\mathbb{E} (m_i\|\bm{u}_i(k+1)-\bm{u}_i^{\star}\|_2^2)\}$, for all $i\in\mathcal{N}$, is summable and converges to 0. Similarly, $\{ \mathbb{E}(\|\hat{\bm{v}}_i^j(k)-\bm{v}_i^j(k) \|_2^2)\}$, and  $\{\mathbb{E}(\|\hat{\bm{v}}_i^j(k)+\hat{\bm{v}}_j^i(k)\|_2^2)\}$, for all $j\in\mathcal{N}_i$ and $i\in\mathcal{N}$, are also summable and converge to 0. 
	Using the Markov inequality, for any $\varepsilon > 0$,  we have that $\limsup_{k\to\infty} \mathbb{P}\left(\Psi(\bm u, \bm v, \hat{\bm v})\geq \varepsilon\right) \leq \limsup_{k\to\infty} \frac{1}{\varepsilon} \mathbb{E}\left(\Psi(\bm u, \bm v, \hat{\bm v})\right)=0,$ where
	$	 \Psi(\bm{u}, \bm{v}, \hat{\bm v}) 
		= \sum_{i\in\mathcal{N}} m_i\|\bm{u}_i(k+1)-\bm{u}_i^{\star}\|_2^2 
		 +\sum_{i\in\mathcal{N}}  \sum_{j\in \mathcal{N}_i} \left(\frac{3}{2}-\eta_{ij}\right)\|\bm{v}_i^j(k)-\hat{\bm{v}}_i^j(k) \|_2^2 
		+\sum_{i\in\mathcal{N}} \sum_{j\in \mathcal{N}_i} \frac{\eta_{ij}-(2\eta_{ij})^2}{2}\|\hat{\bm{v}}_i^j(k)+\hat{\bm{v}}_j^i(k)\|_2^2.$ 
	\color{black}
	Thus, it holds with probability 1 that
	\begin{align}
		\lim_{k\to\infty}\|\bm{u}_i(k)-\bm{u}_i^{\star}\|_2^2 &= 0, \ \forall i\in\mathcal{N},\label{eq:u-us} \\
		\lim_{k\to\infty}\|\bm{v}_i^j(k)-\hat{\bm{v}}_i^j(k)\|_2^2 &= 0, \ \forall j\in\mathcal{N}_i, \ \forall i\in\mathcal{N},\label{eq:v-vh} \\
		\lim_{k\to\infty}\|\hat{\bm{v}}_i^j(k)+\hat{\bm{v}}_j^i(k)\|_2^2 &= 0, \ \forall j\in\mathcal{N}_i, \ \forall i\in\mathcal{N},\label{eq:vh+vh} 
	\end{align}
	Moreover, based on \eqref{eq:v-vh} and \eqref{eq:vh+vh}, it follows with probability 1 that
	\begin{equation}
		\lim_{k\to\infty}\|{\bm{v}}_i^j(k)+{\bm{v}}_j^i(k)\|_2^2=0,\ \forall j\in\mathcal{N}_i, \ \forall i\in\mathcal{N}.
		\label{eq:vij+vji}
	\end{equation}
	 	 
	 Based on \eqref{eq:ineq_supermart} and the martingale convergence theorem, the sequences $\{\|\bm{v}(k)-\bm{v}^{\star}\|_{\tilde{H}}^2\}$ and $\{\|{\bm{\nu}}(k)-\bm{\lambda}^{\star}\|_{\tilde{H}}^2\}$ are bounded with probability 1, i.e., there exist accumulation points of the sequences $\{\bm{v}(k)\}$ and $\{{\bm{\nu}}(k)\}$. Furthermore, $\{\bm{\lambda}(k)\}$ is also bounded with probability 1 and has accumulation points due to the boundedness of $\{{\bm{\nu}}(k)\}$, the relation in \eqref{eq:vij+vji}, and the fact that ${\bm{z}}_i^j(k)= {\bm{v}}_j^i(k)$, for each $k \in \mathbb{Z}_{\geq 0}$, which follows from the initialization of $\bm{z}_i^j(k)$ in Algorithm \ref{alg_tv} and the update rule \eqref{eq:z_up_agl_tv}.

	 Let $\{(\bm{v}(k_{\ell}), \bm{\lambda}(k_{\ell}))\}$ be a convergent subsequence and assume that  $({\bm{v}}^{\mathrm{a}},\bm{\lambda}^{\mathrm{a}})$ is its limit point. Therefore, due to the initialization of the variables in Algorithm \ref{alg_tv} and the update rules \eqref{eq:z_up_agl_tv} and \eqref{eq:xi_up_agl_tv}, it follows that $\lim_{\ell\to\infty} {\bm{z}}_i^j(k_{\ell})=\lim_{\ell\to\infty} {\bm{v}}_j^i(k_{\ell})=\bm{v}_j^{i\mathrm{a}}$ and $\lim_{{\ell}\to\infty} {\bm{\xi}}_i^j(k_{\ell})=\lim_{_{\ell}\to\infty} {\bm{\lambda}}_j^i(k_{\ell})=\bm{\lambda}_j^{i\mathrm{a}}$ with probability 1, for each $j\in\mathcal{N}_i$ and $i\in\mathcal{N}$. 
	 
	 Now, we need to show that $({\bm{u}}^{\star},{\bm{v}}^{\mathrm{a}},\bm{\lambda}^{\mathrm{a}})$ is a saddle point of $L(\bm{u},\bm{v},\bm{\lambda})$, i.e.,  $({\bm{u}}^{\star},{\bm{v}}^{\mathrm{a}},\bm{\lambda}^{\mathrm{a}})$ satisfies the inequalities in \eqref{eq:saddle_point}. Based on \eqref{eq:vij+vji}, ${\bm{v}}_i^{j\mathrm{a}}+{\bm{v}}_j^{i\mathrm{a}}=\lim_{{\ell}\to\infty}({\bm{v}}_i^j(k_{\ell})+{\bm{v}}_j^i(k_{\ell}))=0$, with probability 1, for all $j\in\mathcal{N}_i$ and $i\in\mathcal{N}$. Thus, we have that, for any $\bm{\lambda}\in\mathbb{R}^{\sum_{i\in\mathcal{N}}n_s|\mathcal{N}_i|}$, $L({\bm{u}}^{\star},{\bm{v}}^{\mathrm{a}},\bm{\lambda}) = L({\bm{u}}^{\star},{\bm{v}}^{\mathrm{a}},\bm{\lambda}^{\mathrm{a}})$, satisfying the first inequality in \eqref{eq:saddle_point}. Now, we show the second inequality in \eqref{eq:saddle_point}. Consider the update step \eqref{eq:primal_uv_up_agl_tv}, for all $i\in\mathcal{N}$, i.e.,
	\begin{align*} 
		({\bm{u}}(k+1),\hat{\bm{v}}(k)) =
		 \arg\min_{(\bm{u}_i,\bm{v}_i) \in \mathcal{C}_i,i\in\mathcal{N}} \sum_{i\in\mathcal{N}}\Bigl(f_i^{\mathrm{p}}(\bm{u}_{i}) + f_i^{\mathrm{s}}(\bm{v}_{i})\Bigr. \notag\\
+		\sum_{j\in \mathcal{N}_i} \left(\langle \bm{\lambda}_i^{j}(k)+\bm{\xi}_i^{j}(k),\bm{v}_{i}^j\rangle+\|\bm{v}_{i}^j + \bm{z}_{i}^{j}(k)\|_2^2\right)\Bigr).
	\end{align*}
	 By substituting $k$ with $k_{\ell}$ and taking the limit as $\ell$ goes to infinity on both sides of the equality, it holds with probability 1 that  
	 \begin{align}
	 ({\bm{u}}^{\star},{\bm{v}}^{\mathrm{a}}) 
	 &= \lim_{{\ell}\to\infty} \arg\min_{(\bm{u}_i,\bm{v}_i) \in \mathcal{C}_i,i\in\mathcal{N}} \sum_{i\in\mathcal{N}}\Bigl(f_i^{\mathrm{p}}(\bm{u}_{i}) + f_i^{\mathrm{s}}(\bm{v}_{i})+\Bigr. \notag\\
	 &\quad \Bigl.\sum_{j\in \mathcal{N}_i} \left(\langle \bm{\lambda}_i^{j}(k_{\ell})+\bm{\xi}_i^{j}(k_{\ell}),\bm{v}_{i}^j\rangle+\|\bm{v}_{i}^j + \bm{z}_{i}^{j}(k_{\ell})\|_2^2\right)\Bigr) \notag\\		
	 &= \arg\min_{(\bm{u}_i,\bm{v}_i) \in \mathcal{C}_i,i\in\mathcal{N}} \sum_{i\in\mathcal{N}}\Bigl(f_i^{\mathrm{p}}(\bm{u}_{i}) + f_i^{\mathrm{s}}(\bm{v}_{i})+\Bigr. \notag\\
	 &\qquad \Bigl.\sum_{j\in \mathcal{N}_i}\left(\langle \bm{\lambda}_i^{j\mathrm{a}}+\bm{\lambda}_j^{i\mathrm{a}},\bm{v}_{i}^j\rangle+\|\bm{v}_{i}^j + \bm{v}_j^{i\mathrm{a}}\|_2^2\right)\Bigr) \notag\\
	 &= \arg\min_{(\bm{u}_i,\bm{v}_i) \in \mathcal{C}_i,i\in\mathcal{N}} \sum_{i\in\mathcal{N}}\Bigl(f_i^{\mathrm{p}}(\bm{u}_{i}) + f_i^{\mathrm{s}}(\bm{v}_{i})+\Bigr. \notag\\
	 &\qquad \Bigl.\sum_{j\in \mathcal{N}_i}\langle \bm{\lambda}_i^{j\mathrm{a}},\bm{v}_{i}^j+\bm{v}_{j}^i\rangle\Bigr). \label{eq:uv_a}
	 \end{align}
	 The left-hand side of the first equality is obtained by using  $\lim_{{\ell}\to\infty}(\bm{u}(k_{\ell}+1),\hat{\bm{v}}(k_{\ell}))=({\bm{u}}^{\star},{\bm{v}}^{\mathrm{a}})$, with probability 1, due to \eqref{eq:u-us} and \eqref{eq:v-vh},  which implies that $\lim_{{\ell}\to\infty}\hat{\bm{v}}(k_\ell)={\bm{v}}^{\mathrm{a}}$, with probability 1. The second equality is obtained since $\lim_{\ell\to\infty} {\bm{z}}_i^j(k_{\ell})=\bm{v}_j^{i\mathrm{a}}$ and $\lim_{\ell\to\infty} {\bm{\xi}}_i^j(k_{\ell})=\bm{\lambda}_j^{i\mathrm{a}}$, with probability 1, for all $j\in\mathcal{N}_i$ and $i\in\mathcal{N}$.  
	Then, the last equality holds since the term  $\sum_{i\in\mathcal{N}}\sum_{j\in \mathcal{N}_i}\|\bm{v}_i^j+\bm{v}_j^{i\mathrm{a}}\|_2^2$ is zero at $({\bm{u}}^{\star},{\bm{v}}^{\mathrm{a}})$ due to the fact that  $\bm{v}_i^{j\mathrm{a}}+{\bm{v}}_j^{i\mathrm{a}}=0$, for all $j\in\mathcal{N}_i$ and $i\in\mathcal{N}$.  Additionally, $\bm{v}^{\mathrm{a}}$ is also an attainer of $\min_{\bm{v}} \sum_{i\in\mathcal{N}}\sum_{j\in \mathcal{N}_i}\|\bm{v}_i^j+\bm{v}_j^i\|_2^2$ since $\bm{v}_i^{j\mathrm{a}}+{\bm{v}}_j^{i\mathrm{a}}=0$, for all $j\in\mathcal{N}_i$ and $i\in\mathcal{N}$. 
	Therefore, the pair $({\bm{u}}^{\star},{\bm{v}}^{\mathrm{a}})$ also minimizes $L(\bm{u},\bm{v},\bm{\lambda}^{\mathrm{a}})$, i.e.,
$		({\bm{u}}^{\star},{\bm{v}}^{\mathrm{a}}) 
		\in \arg\min_{(\bm{u}_i,\bm{v}_i) \in \mathcal{C}_i,i\in\mathcal{N}} \sum_{i\in\mathcal{N}}\Bigl(f_i^{\mathrm{p}}(\bm{u}_{i}) + f_i^{\mathrm{s}}(\bm{v}_{i})+
		\sum_{j\in \mathcal{N}_i}\left(\langle \bm{\lambda}_i^{j\mathrm{a}},\bm{v}_{i}^j+\bm{v}_{j}^i\rangle+\|\bm{v}_{i}^j + \bm{v}_j^{i}\|_2^2\right)\Bigr),$ \color{black}
	where the cost function in the minimization is obtained by adding the quadratic term $\sum_{i\in\mathcal{N}}\sum_{j\in \mathcal{N}_i}\|\bm{v}_i^j+\bm{v}_j^i\|_2^2$ to the cost function on the right-hand side of the last equality in \eqref{eq:uv_a}. 
	Hence, the preceding relation implies the second inequality in \eqref{eq:saddle_point}. 
	Thus, $({\bm{u}}^{\star},{\bm{v}}^{\mathrm{a}},\bm{\lambda}^{\mathrm{a}})$ is a saddle point of $L(\bm{u},\bm{v},\bm{\lambda})$. 	
	Finally, we can set $\bm{v}^{\star}=\bm{v}^{\mathrm{a}}$ and $\bm{\lambda}^{\star}=\bm{\lambda}^{\mathrm{a}}$ in $\tilde{V}(k)$ (see \eqref{eq:Vk_tilde}). Since the subsequence of $\tilde{V}(k_{\ell})$ converges to 0 with probability 1 and $\tilde{V}(k)$ is non-negative supermartingale, the entire sequence $\{(\bm{v}(k),\bm{\lambda}(k)\}$ converges to  $({\bm{v}}^{\mathrm{a}},\bm{\lambda}^{\mathrm{a}})$ with probability 1. \color{black}
\subsection{Proof of Theorem \ref{th:rate}}
\label{sec:pf_th_rate}
	By rearranging the summation of \eqref{eq:ineq_supermart} over $\ell=0,\dots,k-1$ and taking the total expectation, we have that
	\begin{equation}
	\begin{aligned}
	& \sum_{\ell = 0}^{k-1}\sum_{i\in\mathcal{N}} \mathbb{E} \left(m_i\|\bm{u}_i(\ell+1)-\bm{u}_i^{\star}\|_2^2\right)   \\
	& + \sum_{\ell = 0}^{k-1}\sum_{i\in\mathcal{N}}  \sum_{j\in \mathcal{N}_i} \left(\frac{3}{2}-\eta_{ij}\right)\mathbb{E}\left(\|\hat{\bm{v}}_i^j(\ell)-\bm{v}_i^j(\ell) \|_2^2\right)  \\
	& + \sum_{\ell = 0}^{k-1}\sum_{i\in\mathcal{N}} \sum_{j\in \mathcal{N}_i} \frac{\eta_{ij}-(2\eta_{ij})^2}{2}\mathbb{E}\left(\|\hat{\bm{v}}_i^j(\ell)+\hat{\bm{v}}_j^i(\ell)\|_2^2\right) \\
	& \leq \sum_{\ell = 0}^{k-1}\mathbb{E}\left( \tilde{V}(\ell)-\tilde{V}(\ell+1)\right) = \tilde{V}(0)-\mathbb{E}\left(\tilde{V}(k)\right)\\
	&\leq \frac{1}{\underline{\alpha}}\|\bm{v}(0)-\bm{v}^{\star}\|_{{H}}^2+\frac{1}{2\underline{\alpha}}\|\tilde{\bm{\lambda}}(0)-\bm{\lambda}^{\star}\|_{{H}}^2,
	\end{aligned}
	\label{eq:rate_ineq2}
	\end{equation}
	 where the last inequality is obtained by dropping the non-positive term $-\mathbb{E}\left(\tilde{V}(k)\right)$ and by defining $\underline{\alpha} = \min_{\{i,j\}\in\mathcal{E}} \gamma_{i}\gamma_{j}\beta_{ij}$. \color{black}
	 Furthermore, due to the convexity of the squared of the Euclidean norm, it follows that, for $k\geq1$,   
$	 k\mathbb{E} (\|\bar{\bm{u}}_i(k)-\bm{u}_i^{\star}\|_2^2) \leq \sum_{\ell = 0}^{k-1}\mathbb{E} (\|\bm{u}_i(\ell+1)-\bm{u}_i^{\star}\|_2^2),$ 
$	 k\mathbb{E}(\|\bar{\hat{\bm{v}}}_i^j({k-1})-\bar{\bm{v}}_i^j({k-1}) \|_2^2) \leq \sum_{\ell = 0}^{k-1}\mathbb{E}(\|\hat{\bm{v}}_i^j(\ell)-\bm{v}_i^j(\ell) \|_2^2),$ 
$	 k \mathbb{E}(\|\bar{\hat{\bm{v}}}_i^j(k-1)+\bar{\hat{\bm{v}}}_j^i(k-1)\|_2^2) \leq \sum_{\ell = 0}^{k-1} \mathbb{E}(\|\hat{\bm{v}}_i^j(\ell)+\hat{\bm{v}}_j^i(\ell)\|_2^2)$. \color{black}
	 By applying the above relations to \eqref{eq:rate_ineq2} and using the fact that $m_i>0$, for all $i\in \mathcal{N}$, and $\frac{3}{2}-\eta_{ij}>0$, $\frac{\eta_{ij}-(2\eta_{ij})^2}{2}>0$, for all $j\in\mathcal{N}_i$ and $i\in\mathcal{N}$, we have the desired convergence rate, i.e.,  for $k\geq 1$,
	 	\begin{align}
	&\sum_{i\in\mathcal{N}}\mathbb{E} \left(m_i\|\bar{\bm{u}}_i(k)-\bm{u}_i^{\star}\|_2^2\right) \notag\\
	&+\sum_{i\in\mathcal{N}}  \sum_{j\in \mathcal{N}_i} \left(\frac{3}{2}-\eta_{ij}\right)\mathbb{E}\left(\|\bar{\hat{\bm{v}}}_i^j(k-1)-\bar{\bm{v}}_i^j(k-1) \|_2^2\right) \notag\\
	&+\sum_{i\in\mathcal{N}} \sum_{j\in \mathcal{N}_i} \frac{\eta_{ij}-(2\eta_{ij})^2}{2}\mathbb{E}\left(\|\bar{\hat{\bm{v}}}_i^j(k-1)+\bar{\hat{\bm{v}}}_j^i(k-1)\|_2^2\right)  \notag\\
	&\leq  \frac{1}{\underline{\alpha}k}\Big(\Big.\|\bm{v}(0)-\bm{v}^{\star}\|_{{H}}^2+\frac{1}{2}\|\tilde{\bm{\lambda}}(0)-\bm{\lambda}^{\star}\|_{{H}}^2\Big.\Big).  \label{eq:rate}
	\end{align}
\begin{remark}
 The inequality \eqref{eq:rate} implies that if the activation probabilities of agents and links are larger, then the convergence is achieved faster.  \eod \color{black}
\end{remark}


\section{Conclusion and Future Work}
\label{sec:concl}
This technical note discusses a distributed algorithm for a multi-agent convex optimization problem with edge-based coupling constraints, which is related to energy management problems. The proposed method works asynchronously over time-varying communication networks. We model the asynchronicity and the time-varying nature of the communication network as random processes and show the convergence and the rate of the proposed algorithm. 
As future work, we consider generalizing the problem that can be dealt with, such as by introducing global objectives of control, coupling inequality constraints, or non-convex coupling constraints, which is relevant to the optimal power flow problems in power systems. Moreover, we also consider the implementation of inexact minimization to the proposed algorithm to reduce computational burden.


\bibliographystyle{ieeetran}
\bibliography{ref_ja}

\begin{thebibliography}{10}
\providecommand{\url}[1]{#1}
\csname url@samestyle\endcsname
\providecommand{\newblock}{\relax}
\providecommand{\bibinfo}[2]{#2}
\providecommand{\BIBentrySTDinterwordspacing}{\spaceskip=0pt\relax}
\providecommand{\BIBentryALTinterwordstretchfactor}{4}
\providecommand{\BIBentryALTinterwordspacing}{\spaceskip=\fontdimen2\font plus
\BIBentryALTinterwordstretchfactor\fontdimen3\font minus
  \fontdimen4\font\relax}
\providecommand{\BIBforeignlanguage}[2]{{%
\expandafter\ifx\csname l@#1\endcsname\relax
\typeout{** WARNING: IEEEtran.bst: No hyphenation pattern has been}%
\typeout{** loaded for the language `#1'. Using the pattern for}%
\typeout{** the default language instead.}%
\else
\language=\csname l@#1\endcsname
\fi
#2}}
\providecommand{\BIBdecl}{\relax}
\BIBdecl

\bibitem{molzahn2017}
D.~K. {Molzahn}, F.~{D\"{o}rfler}, H.~{Sandberg}, S.~H. {Low},
  S.~{Chakrabarti}, R.~{Baldick}, and J.~{Lavaei}, ``A survey of distributed
  optimization and control algorithms for electric power systems,'' \emph{IEEE
  Transactions on Smart Grid}, vol.~8, no.~6, pp. 2941--2962, 2017.

\bibitem{bertsekas1998}
D.~P. Bertsekas, \emph{Network optimization: continuous and discrete
  models}.\hskip 1em plus 0.5em minus 0.4em\relax Athena Scientific Belmont,
  MA, 1998.

\bibitem{rostampour2019}
V.~{Rostampour} and T.~{Keviczky}, ``Probabilistic energy management for
  building climate comfort in smart thermal grids with seasonal storage
  systems,'' \emph{IEEE Transactions on Smart Grid}, vol.~10, no.~4, pp.
  3687--3697, 2019.

\bibitem{grosso2014}
J.~Grosso, C.~Ocampo-Mart\'inez, V.~Puig, and B.~Joseph, ``Chance-constrained
  model predictive control for drinking water networks,'' \emph{Journal of
  Process Control}, vol.~24, no.~5, pp. 504--516, 2014.

\bibitem{bertsekas1997}
D.~P. Bertsekas and J.~N. Tsitsiklis, \emph{Parallel and Distributed
  Computation: Numerical Methods}.\hskip 1em plus 0.5em minus 0.4em\relax
  Athena Scientific, 1997.

\bibitem{boyd2010}
S.~Boyd and L.~Vandenberghe, \emph{Convex Optimization}.\hskip 1em plus 0.5em
  minus 0.4em\relax Cambridge University Press, 2010, vol.~25, no.~3.

\bibitem{boyd2011}
S.~Boyd, N.~Parikh, E.~Chu, B.~Peleato, and J.~Eckstein, ``Distributed
  optimization and statistical learning via the alternating direction method of
  multipliers,'' \emph{Foundations and Trends in Machine Learning}, vol.~3,
  no.~1, pp. 1--122, 2011.

\bibitem{bakirtzis2003}
A.~G. {Bakirtzis} and P.~N. {Biskas}, ``A decentralized solution to the dc-opf
  of interconnected power systems,'' \emph{IEEE Transactions on Power Systems},
  vol.~18, no.~3, pp. 1007--1013, 2003.

\bibitem{kraning2014}
M.~Kraning, E.~Chu, J.~Lavaei, and S.~Boyd, ``Dynamic network energy management
  via proximal message passing,'' \emph{Foundations and Trends in
  Optimization}, vol.~1, no.~2, pp. 73--126, 2014.

\bibitem{wang2015}
T.~Wang, D.~O'Neill, and H.~Kamath, ``Dynamic control and optimization of
  distributed energy resources in a microgrid,'' \emph{IEEE Transactions on
  Smart Grid}, vol.~6, no.~6, pp. 2884--2894, 2015.

\bibitem{hans2018}
C.~A. Hans, P.~Braun, J.~Raisch, L.~Grune, and C.~Reincke-Collon,
  ``Hierarchical distributed model predictive control of interconnected
  microgrids,'' \emph{IEEE Trans. Sustainable Energy}, vol.~10, no.~1, pp.
  407--416, 2019.

\bibitem{ge2017}
X.~Ge, F.~Yang, and Q.-L. Han, ``Distributed networked control systems: A brief
  overview,'' \emph{Information Sciences}, vol. 380, pp. 117--131, 2017.

\bibitem{nedic2015}
A.~{Nedi\'{c}} and A.~{Olshevsky}, ``Distributed optimization over time-varying
  directed graphs,'' \emph{IEEE Transactions on Automatic Control}, vol.~60,
  no.~3, pp. 601--615, 2015.

\bibitem{chatzipanagiotis2015}
N.~Chatzipanagiotis, D.~Dentcheva, and M.~M. Zavlanos, ``An augmented
  {Lagrangian} method for distributed optimization,'' \emph{Mathematical
  Programming}, vol. 152, no.~1, pp. 405--434, 2015.

\bibitem{lee2018}
S.~{Lee}, N.~{Chatzipanagiotis}, and M.~M. {Zavlanos}, ``Complexity
  certification of a distributed augmented lagrangian method,'' \emph{IEEE
  Transactions on Automatic Control}, vol.~63, no.~3, pp. 827--834, 2018.

\bibitem{tsai2017}
S.~{Tsai}, Y.~{Tseng}, and T.~{Chang}, ``Communication-efficient distributed
  demand response: A randomized {ADMM} approach,'' \emph{IEEE Transactions on
  Smart Grid}, vol.~8, no.~3, pp. 1085--1095, 2017.

\bibitem{wei2013}
E.~Wei and A.~Ozdaglar, ``On the {O(1/k)} convergence of asynchronous
  distributed alternating direction method of multipliers,'' pp. 1--30, 2013,
  arXiv:1307.8254.

\bibitem{chang2016}
T.~{Chang}, M.~{Hong}, W.~{Liao}, and X.~{Wang}, ``Asynchronous distributed
  admm for large-scale optimization—part i: Algorithm and convergence
  analysis,'' \emph{IEEE Transactions on Signal Processing}, vol.~64, no.~12,
  pp. 3118--3130, 2016.

\bibitem{hong2017}
M.~{Hong} and T.~{Chang}, ``Stochastic proximal gradient consensus over random
  networks,'' \emph{IEEE Transactions on Signal Processing}, vol.~65, no.~11,
  pp. 2933--2948, 2017.

\bibitem{kar2014}
S.~Kar, G.~Hug, J.~Mohammadi, and J.~M.~F. Moura, ``Distributed state
  estimation and energy management in smart grids: A consensus${+}$innovations
  approach,'' \emph{IEEE Journal of Selected Topics in Signal Processing},
  vol.~8, no.~6, pp. 1022--1038, 2014.

\bibitem{hug2015}
G.~Hug, S.~Kar, and C.~Wu, ``Consensus $+$ innovations approach for distributed
  multiagent coordination in a microgrid,'' \emph{IEEE Transactions on Smart
  Grid}, vol.~6, no.~4, pp. 1893--1903, 2015.

\bibitem{baker2016}
K.~Baker, J.~Guo, G.~Hug, and X.~Li, ``Distributed {MPC} for efficient
  coordination of storage and renewable energy sources across control areas,''
  \emph{IEEE Transactions on Smart Grid}, vol.~7, no.~2, pp. 992--1001, 2016.

\bibitem{kargarian2018}
A.~{Kargarian}, J.~{Mohammadi}, J.~{Guo}, S.~{Chakrabarti}, M.~{Barati},
  G.~{Hug}, S.~{Kar}, and R.~{Baldick}, ``Toward distributed/decentralized dc
  optimal power flow implementation in future electric power systems,''
  \emph{IEEE Transactions on Smart Grid}, vol.~9, no.~4, pp. 2574--2594, 2018.

\bibitem{sousa2019}
T.~Sousa, T.~Soares, P.~Pinson, F.~Moret, T.~Baroche, and E.~Sorin,
  ``Peer-to-peer and community-based markets: A comprehensive review,''
  \emph{Renewable and Sustainable Energy Reviews}, vol. 104, pp. 367--378,
  2019.

\bibitem{ananduta2020}
W.~Ananduta, J.~M. Maestre, C.~Ocampo-Martinez, and H.~Ishii, ``Resilient
  distributed model predictive control for energy management of interconnected
  microgrids,'' \emph{Optimal Control Applications and Methods}, vol.~41,
  no.~1, pp. 146--169, 2020.

\bibitem{beck2017}
A.~Beck, \emph{First-Order Methods in Optimization}.\hskip 1em plus 0.5em minus
  0.4em\relax Society for Industrial and Applied Mathematics, 2017.

\bibitem{bertsekas1995}
D.~Bertsekas, \emph{Nonlinear Programming}.\hskip 1em plus 0.5em minus
  0.4em\relax Athena Scientific, 1995.

\bibitem{nedic2008}
A.~Nedi\'{c}, \emph{Lecture Notes Optimization I}.\hskip 1em plus 0.5em minus
  0.4em\relax Hamilton Institute, 2008.

\end{thebibliography}

\end{document}